\pgfplotsset{compat=newest}
\newtheorem{theorem}{Theorem}[section]
\newtheorem{lemma}{Lemma}[section]
\newtheorem{definition}{Definition}[section]
\newcommand{\Z}{\mathbb{Z}}
\newcommand{\R}{\mathbb{R}}
\newcommand{\C}{\mathbb{C}}
\begin{document}

\begin{flushleft}
\Large 
\noindent{\bf \Large Analysis of the monotonicity method for an anisotropic scatterer with a conductive boundary}
\end{flushleft}

\vspace{0.2in}
{\bf  \large Victor Hughes, Isaac Harris, and Heejin Lee}\\
\indent {\small Department of Mathematics, Purdue University, West Lafayette, IN 47907 }\\
\indent {\small Email: \texttt{vhughes@purdue.edu}, \texttt{harri814@purdue.edu}, and  \texttt{lee4485@purdue.edu}}\\

\begin{abstract}
\noindent In this paper, we consider the inverse scattering problem associated with an anisotropic medium with a conductive boundary. We will assume that the corresponding far--field pattern is known/measured and we consider two inverse problems. First, we show that the far--field data uniquely determines the boundary coefficient. Next, since it is known that anisotropic coefficients are not uniquely determined by this data we will develop a qualitative method to recover the scatterer. To this end, we study the so--called monotonicity method applied to this inverse shape problem. This method has recently been applied to some inverse scattering problems but this is the first time it has been applied to an anisotropic scatterer. This method allows one to recover the scatterer by considering the eigenvalues of an operator associated with the far--field operator. We present some simple numerical reconstructions to illustrate our theory in two dimensions. For our reconstructions, we need to compute the adjoint of the Herglotz wave function as an operator mapping into $H^1$ of a small ball. 
\end{abstract}

\noindent {\bf Keywords}: Inverse Scattering $\cdot$ Monotonicity Method $\cdot$ Far--Field Operator $\cdot$ Anisotropic Scatterer \\

\noindent {\bf MSC}: 35J05, 35J25

\section{Introduction} \label{acbc_section_intro}
Here, we will study the {\it inverse shape problem} of recovering the scatterer as well as the {\it inverse boundary parameter problem} from given far--field data. For the inverse shape problem, we analyze the monotonicity method for reconstructing an anisotropic medium with a conductive boundary. The monotonicity method is a qualitative reconstruction scheme for the unknown region that is comparable to the so--called linear sampling \cite{LSM-maxwell-book,MUSIC-LSM} and factorization methods \cite{factor-music,kirschbook}. This is do to the fact that each method derives an imaging function that requires the eigenvalue(or singular) value decomposition of an operator associated with the known/measured far--field operator. The monotonicity method does not require us to have any restriction on the wave number i.e. valid even at  transmission eigenvalues(see for e.g. \cite{te-cbc,On-the-interior-TE,aniso-te-cbc,otheraniso-tecbc}). This is not true for the linear sampling and factorization methods but here we need to assume we know the sign of the anisotropic contrast. Our main contribution will be to study the application of the monotonicity method for an anisotropic material with a conductive boundary on an unbounded domain. This problem has not been studied before and will require original analysis to complete. Indeed, one of the main calculations needed to apply the monotonicity method to an anisotropic scatterer is to compute the adjoint of the Herglotz wave function as an operator mapping into $L^2(\mathbb{S}^{d-1}) \to H^1\big( B(z, \epsilon) \big)$. Here $B(z, \epsilon)$ denotes the `sampling ball' at the `sampling point' $z \in \R^d$. The main result of this paper states that for a known operator associated with the measurements only has finitely many negative eigenvalues provided that the sampling ball is contained in the scatterer. This gives us a way to recover the scatterer via computing the eigenvalues of a known operator with little {\it a priori} information on the geometry. Since the imaging function is given by computing the eigenvalues of known operators, this would imply stability by the continuity of the spectrum.

Here the conductive boundary condition is modeled by a Robin condition that states the normal/co-normal derivative of the total field has a jump across the boundary of the scatterer that is proportional to the total field. We remark that this is different from the model presented in \cite{fm-anisocbc} but was recently studied in \cite{aniso-te-cbc,otheraniso-tecbc}. The aforementioned papers consider the associated transmission eigenvalue problem. We wish to fill the current gap in knowledge i.e. to study the associated inverse problems for this model. For this model, we will need to derive a factorization of the far--field operator. See \cite{fm-anisoKL} for a a factorization of the far--field operator with no conductivity parameter. We note that our analysis is still valid for the case when there is no conductive boundary parameter. This is then used in conjunction with the main result in \cite{mono_method_paper} to derive a monotonicity method. In \cite{mono_method_paper} the authors give a theoretical blueprint for applying the monotonicity method to inverse scattering problems. This method has been studied in \cite{mono-invscat-media} for other inverse media scattering problems as well as in \cite{mono_AG} for some inverse obstacle scattering problems. The monotonicity method has been used in shape reconstruction for the $p$--Laplace \cite{mono-p-laplace}, electrical impedance tomography \cite{mono-eit} as well as other models/configurations see for e.g. \cite{mono-waveguide,mono-eit-xtreme,mono-invscat-crack,mono-schrodinger,mono-other}. This paper adds to the growing literature on the monotonicity method applied to many imaging modalities.

The rest of the paper is organized as follows: In Section \ref{acbc_section_formulation}, we formulate the direct and inverse scattering problem for an anisotropic material with a conductive boundary. Next, in  Section \ref{acbc_section_uniq} we study the uniqueness of the inverse parameter problem for determining the boundary parameter. We are able to prove that the far--field pattern uniquely determines the boundary parameter. In section \ref{acbc_section_factorization}, we decompose the far--field operator into a symmetric factorization, this is necessary for the use of the monotonicity method. Once an an appropriate factorization is derived, in Section \ref{acbc_section_monomethod} we provide an explanation on how to apply the monotonicity method to this problem given our factorization of the far field operator using the theory developed in \cite{mono_method_paper}. Finally, in Section \ref{acbc_section_numerics}, we provide numerical examples to validate our theoretical results.

\section{Formulation of the Problem} \label{acbc_section_formulation}
We start by discussing the direct scattering problem for an anisotropic material with a conductive boundary. Let $D \subset \mathbb{R}^d$ for $d=$ 2 or 3 be a simply connected open set with Lipschitz boundary $\partial D$, where $\nu$ represents the unit outward normal vector. The region $D$ denotes the scatterer that we illuminate with an incident plane wave $u^i:=\text{e} ^{\text{i} kx\cdot\hat{y}}$ such that $k>0$ is the wave number and $\hat{y} \in \mathbb{S}^{d-1}$ denotes the incident direction. We assume that we have a symmetric matrix-valued function $A(x) \in \mathscr{C}^1 (\overline{D}, \mathbb{C}^{d\times d})$ that is uniformly positive definite in $D$ satisfying 
\begin{center}
    $ \overline{\xi} \cdot \text{Re}(A(x))\xi \geq A_{\text{min}}|\xi|^2 $ \quad and \quad $\overline{\xi} \cdot \text{Im}(A(x))\xi \leq 0$ \quad for a.e. $x \in D$ and $\xi \in \mathbb{C}^d$
\end{center}
where $A_{\text{min}}$ is a positive constant. We also assume that the refractive index $n(x) \in L^\infty (\mathbb{R}^d)$ satisfies
\begin{center}
    $\text{Im}(n(x)) \geq 0$ \quad for a.e. $x \in D$.
\end{center}
We assume that the parameters $A(x)$ and $n(x)$ for an anisotropic material satisfy
$$A(x)=I \quad \text{and} \quad n(x)=1\quad \text{for }x \in \R^d \setminus \overline{D}. $$
We also consider the conductivity parameter $\eta \in L^\infty(\partial D)$, with the condition
\begin{center}
    $\text{Im}(\eta(x)) \geq 0$ \quad for a.e. $x \in \partial D$.
\end{center}

We have that the direct scattering problem for an anisotropic material with a conductive boundary condition is formulated as follows: find $u^s \in H^1_{loc}(\mathbb{R}^d)$ such that
\begin{align}
 \Delta u^s + k^2u^s = 0 \quad \text{in $\mathbb{R}^d \setminus \overline{D}$} \quad \text{ and } \quad  \nabla \cdot A\nabla u+k^2nu=0 \quad &\text{in $D$} \label{directacbc1} \\ 
(u^s+u^i)^+ = u^-  \quad \text{ and } \quad  \partial_\nu (u^s+u^i)^+ = \nu \cdot A\nabla u^- - \eta u \quad &\text{on $\partial D$}  \label{directacbc2} 
\end{align}
where $u = u^s+u^i$ denotes the total field. Here the superscripts $+$ and $-$ demonstrate approaching the boundary from the outside and inside of $D$, respectively. To close the system, we assume that the scattered field satisfies the radiation condition 
\begin{align}
    \partial_r u^s -\text{i} ku^s = \mathcal{O}\Big(\frac{1}{r^{(d+1)/2}}\Big)  \quad \text{ as } \quad r \to \infty \label{Sommerfield_acbc}
\end{align}
uniformly with respect to $\hat{x}=\frac{x}{r}$ where $r=|x|$. With the above assumptions on the coefficients we can prove well-posedness of \eqref{directacbc1}--\eqref{directacbc2} along with the radiation condition. This can be done in a similar manner to the analysis in \cite{fmconductbc} where a variational formulation of \eqref{directacbc1}--\eqref{directacbc2} for the scattered field $u^s$ is used to study the system. 

We are interested in recovering the scatterer $D$ and/or the boundary parameter $\eta$ from measured far--field data. To this end, recall that the asymptotic expansion of the scattering field $u^s(\cdot \, , \hat{y})$ has the form
\begin{align*}
    u^s(x,\hat{y}) = \gamma_d\frac{\text{e}^{\text{i}k|x|}}{|x|^{\frac{d-1}{2}}} \Bigg\{  u^\infty(\hat{x}, \hat{y}) + \mathcal{O}\Bigg( \frac{1}{|x|} \Bigg)   \Bigg\}
\end{align*}
where $\gamma_d$ is given by 
$$\gamma_2 = \frac{\text{e}^{\text{i}\pi /4}}{\sqrt{8\pi k}}\quad \text{and} \quad \gamma_3=\frac{1}{4\pi}.$$ 
Here we let $\hat{x} = x/|x|$ and $u^\infty(\hat{x},\hat{y})$ is the corresponding far--field pattern associated with \eqref{directacbc1}--\eqref{directacbc2}, which is dependent on the incident direction $\hat{y}$ and the observation direction $\hat{x}$. 
Due to the fact that $u^s$ is a radiating solution to the Helmholtz equation in $\R^d \setminus \overline{D}$ we have that it can be written using Green's representation formula \cite{CCH-book}. Therefore, we have that the far--field pattern has the following integral representation
\begin{align*}
    u^\infty(\hat{x},\hat{y}) = -\int_{\partial \Omega} \text{e}^{-\text{i}k\hat{x}\cdot z}\partial_\nu u^s - u^s\partial_\nu \text{e}^{-\text{i}k\hat{x}\cdot z} \, \mathrm{d}s(z)
\end{align*}
for any region $\Omega$ such that $\overline{D} \subseteq \Omega$. With the far--field pattern, we now define the far--field operator $F: L^2(\mathbb{S}^{d-1}) \longrightarrow L^2(\mathbb{S}^{d-1})$ as
\begin{align*}
    (Fg)(\hat{x}) := \int_{\mathbb{S}^{d-1}} u^\infty (\hat{x}, \hat{y})g(\hat{y}) \,  \text{d}s(\hat{y}) \quad \text{ where } g \in L^2(\mathbb{S}^{d-1}).
\end{align*}
It is well known that the far--field operator $F$ is compact. 

We are interested in the inverse problem of recovering $D$ from the given the far--field data. The main goal is to apply the monotonicity method to perform this reconstruction of $D$, given we have the far--field data. Another inverse problem that we will consider is the uniqueness for recovering the boundary parameter $\eta$ from the known far--field data.

\section{Uniqueness of the Inverse Parameter Problem} \label{acbc_section_uniq}
In this section, we show that the far--field pattern uniquely determines the boundary parameter $\eta$ provided that $D$ is known. In general, the anisotropic material parameter $A$ is not uniquely recovered by the far--field pattern(see for e.g. \cite{GyCo}). Therefore, we will consider a qualitative method to recover $D$ in the preceding section. Here we will prove a uniqueness result for the inverse boundary parameter problem. We are considering the injectivity of the mapping 
$$\eta \in L^\infty(\partial D)  \longmapsto u^\infty(\hat{x}, \hat{y}) \in L^2 \big( \mathbb{S}^{d-1} \times \mathbb{S}^{d-1}\big).$$
To this end, in addition to the assumption on $\eta$ in Section \ref{acbc_section_formulation}, we assume that
\begin{align}\label{assump_k}
\varphi \in H^1_0 (D) \quad \text{satisfies} \quad \nabla \cdot A\nabla \varphi+k^2n \varphi=0 \,\, \text{in $D$} \quad \text{implies $\varphi=0\,\,$ a.e. in $D$ }
\end{align}
and we need a density result as in \cite{Heejin1,Haddar1}. Notice, that \eqref{assump_k} is true for all $k \in \R$ provided that $A$ and $n$ are complex--valued. 
\begin{lemma}\label{uniq_density}
Under the assumption \eqref{assump_k}, the set
\begin{align*}
\mathcal{U} \coloneqq \left\{ u(\cdot, \hat{y})|_{\partial D} : u \textrm{ is the total field of  \eqref{directacbc1}--\eqref{directacbc2} for } \hat{y} \in \mathbb{S}^{d-1}\right\}
\end{align*}
is dense in $L^2(\partial D)$.
\end{lemma}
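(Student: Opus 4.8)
The plan is to prove density by a duality argument rather than by constructing approximants directly: I will show that the orthogonal complement of $\mathcal{U}$ in $L^2(\partial D)$ is trivial. Suppose $g \in L^2(\partial D)$ satisfies $\int_{\partial D} u(x,\hat{y})\,\overline{g(x)}\,\mathrm{d}s(x) = 0$ for every $\hat{y} \in \mathbb{S}^{d-1}$, where $u(\cdot,\hat{y})$ is the total field of \eqref{directacbc1}--\eqref{directacbc2}. The goal is to conclude $g=0$. Since $\overline{g}$ ranges over all of $L^2(\partial D)$ as $g$ does, I may equivalently work with the bilinear pairing $\int_{\partial D} u(x,\hat{y})\,g(x)\,\mathrm{d}s(x)$; by linearity, orthogonality to every plane-wave total field is the same as orthogonality to the trace of every Herglotz-incident total field, which is what the density claim is really about.

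The heart of the argument is an auxiliary radiating transmission problem carrying $g$ as a boundary source. I would define $v=v_g \in H^1_{loc}(\R^d)$ by
\begin{align*}
\Delta v + k^2 v = 0 \ \text{ in } \R^d\setminus\overline{D}, \quad \nabla\cdot A\nabla v + k^2 n v = 0 \ \text{ in } D, \\
v^+ = v^- \quad \text{and} \quad \partial_\nu v^+ - \nu\cdot A\nabla v^- + \eta v = g \quad \text{on } \partial D,
\end{align*}
together with the radiation condition \eqref{Sommerfield_acbc}. Well-posedness of this problem follows from a variational/Fredholm argument identical in structure to the one for \eqref{directacbc1}--\eqref{directacbc2}: the sign conditions on $\mathrm{Im}(A)$, $\mathrm{Im}(n)$, and $\mathrm{Im}(\eta)$ again yield uniqueness through the imaginary-part identity on a large sphere, so $v_g$ is well defined and depends linearly on $g$.

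Next I would relate the far--field pattern of $v_g$ to the pairing. Writing the far--field kernel $\mathrm{e}^{-\mathrm{i}k\hat{x}\cdot z}$ as the incident plane wave in direction $-\hat{x}$ and using the Green representation of $v_g^\infty$, I then apply Green's second identity for the Helmholtz operator in $\R^d\setminus\overline{D}$ and the symmetric identity for $\nabla\cdot A\nabla + k^2 n$ (recall $A$ is symmetric) in $D$, to the pair $v_g$ and the total field $u(\cdot,-\hat{x})$. The volume terms drop because both fields satisfy the governing equations, and the boundary terms reorganize using the two transmission conditions for $u$ and for $v_g$: the conductive terms $\eta u$ and the source $g$ are exactly what make the cross terms collapse, yielding a relation of the form $v_g^\infty(\hat{x}) = c\int_{\partial D} u(z,-\hat{x})\,g(z)\,\mathrm{d}s(z)$ for a nonzero constant $c$ depending only on $\gamma_d$ and $k$. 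Consequently the orthogonality hypothesis is equivalent to $v_g^\infty \equiv 0$ on $\mathbb{S}^{d-1}$.

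Finally I close the argument. By Rellich's lemma together with unique continuation, $v_g^\infty \equiv 0$ forces $v_g = 0$ in $\R^d\setminus\overline{D}$, so that $v_g^+ = 0$ and $\partial_\nu v_g^+ = 0$ on $\partial D$. The continuity condition then gives $v_g^- = 0$, whence $v_g|_D \in H^1_0(D)$ solves $\nabla\cdot A\nabla v_g + k^2 n v_g = 0$ in $D$; assumption \eqref{assump_k} forces $v_g = 0$ in $D$, so $\nu\cdot A\nabla v_g^- = 0$ as well. The remaining flux condition then reduces to $g=0$, completing the proof. I expect the main obstacle to be the duality computation of the third paragraph, namely placing $g$ in the correct boundary condition (and tracking conjugates and signs) so that the pairing emerges as precisely the total--field trace, and bookkeeping the conductive term $\eta$ and the $\pm$ traces through Green's identities so that every cross term cancels. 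A secondary technical point is the well-posedness of the auxiliary problem with an $L^2(\partial D)$ datum rather than the natural $H^{-1/2}(\partial D)$ one, which I would handle within the same Fredholm framework used for the forward problem.
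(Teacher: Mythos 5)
Your proposal is correct and follows essentially the same route as the paper: the auxiliary radiating transmission problem with boundary source $g$ in the jump condition $\partial_\nu v^+ - \nu\cdot A\nabla v^- + \eta v = g$ is exactly the paper's function $p$, the duality computation via Green's second identity (with the symmetry of $A$ killing the interior cross terms and the radiation condition killing the exterior ones) identifying the pairing with the far--field pattern of the auxiliary solution is the paper's central calculation, and the closing chain (Rellich, vanishing traces, $H^1_0(D)$ interior solution eliminated by assumption \eqref{assump_k}, jump condition forcing $g=0$) matches step for step. The only cosmetic differences are that you state the far--field identity $v_g^\infty(\hat{x}) = c\int_{\partial D} u(\cdot,-\hat{x})\,g\,\mathrm{d}s$ before imposing orthogonality, and that you flag well-posedness of the auxiliary problem with $L^2(\partial D)$ data explicitly, which the paper simply asserts.
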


\begin{proof}
To prove the claim, we will show that $\mathcal{U}^\perp$ is trivial. For any $\phi \in \mathcal{U}^\perp$ and
let $p \in H^1_{loc}(\mathbb{R}^d)$ be the unique radiating solution of 
\begin{align*}
 \Delta p + k^2p = 0 \quad \text{in $\mathbb{R}^d \setminus \overline{D}$} \quad \text{ and } \quad  \nabla \cdot A\nabla p+k^2np=0 \quad &\text{in $D$}  \\ 
p^+ = p^-  \quad \text{ and } \quad  \partial_\nu p^+ - \nu \cdot A\nabla p^- +\eta p = \phi \quad &\text{on $\partial D$}.
\end{align*}
If $u$ is the total field of \eqref{directacbc1}--\eqref{directacbc2}, from the boundary conditions, we have that
\begin{align*}
\int_{\partial D} p \partial_\nu u^+ - u \partial_\nu p^+ \, \mathrm{d}s 
= \int_{\partial D} p (\nu \cdot A\nabla u^-) - u(\nu \cdot A\nabla p^-) - u\phi \, \mathrm{d}s.
\end{align*}
Notice that from Green's 2nd identity and the symmetry of $A$, 
\begin{align*}
 \int_{\partial D} p (\nu \cdot A\nabla u^-) - u(\nu \cdot A\nabla p^-) \mathrm{d}s
 = \int_D \nabla p \cdot A \nabla u - \nabla u \cdot A \nabla p \, \mathrm{d}x = 0.
\end{align*}
Therefore, combining the above two equations, we have that
\begin{align*}
\int_{\partial D} p \partial_\nu u^+ - u \partial_\nu p^+ \, \mathrm{d}s  = - \int_{\partial D} u\phi \, \mathrm{d}s = 0,
\end{align*}
that is,
\begin{align*}
\int_{\partial D} p \partial_\nu u^i - u^i \partial_\nu p^+ \, \mathrm{d}s  
=   \int_{\partial D}  u^s \partial_\nu p^+ - p \partial_\nu u^s  \, \mathrm{d}s .
\end{align*}
Applying Green's 2nd identity to $u^s$ and $p$ in $B_R \setminus \overline{D}$ where $B_R$ is the ball of radius $R>0$ centered at the origin and containing $D$ in its interior, we have
\begin{align*}
 \int_{\partial D}  u^s \partial_\nu p^+ - p \partial_\nu u^s  \, \mathrm{d}s 
  = - \int_{B_R}  u^s \partial_\nu p - p \partial_\nu u^s  \, \mathrm{d}s
 \rightarrow 0 \,\, \text{ as }  \,\, R \rightarrow \infty,
 \end{align*}
 since $u^s$ and $p$ are satisfy the radiation condition \eqref{Sommerfield_acbc}. Thus,
 \begin{align}\label{uniq_greenidentity}
 \int_{\partial D} p \partial_\nu u^i - u^i \partial_\nu p^+ \, \mathrm{d}s =0.
 \end{align}
 Recall, that the fundamental solution $\Phi(x,y)$ of the Helmholtz equation is given by  
 \begin{align}\label{fundamentalsolHE}
 \Phi(x,y) =\frac{\text{i} }{4} H^{(1)}_0(k|x-y|) \text{ for } d=2  \quad \text{and} \quad  \Phi(x,y) =\frac{\text{e}^{\text{i}k|x-y|}}{4\pi|x-y|}  \text{ for } d=3,
 \end{align}
  where $H_0^{(1)}$ is the Hankel function of the first kind and of order $0$. Now, consider the incident field $u^i(x, -\hat{y}) = \text{e}^{-\text{i}kx\cdot \hat{y}}$ which is the far field pattern of $\Phi(y,x)$.
 From the Green's representation formula(see for e.g. \cite{CCH-book}),
 \begin{align*}
 p(y) = \int_{\partial D} p \partial_\nu \Phi(\cdot, y) - \partial_\nu p^+ \Phi(\cdot, y) \, \mathrm{d}s \quad \text{ for } \,\,\, y \in \R^d \setminus \overline{D}. 
  \end{align*}
This implies that left-hand side of \eqref{uniq_greenidentity} represents the far--field pattern $p^\infty (\hat{y})$ of the function $p(y)$. Since $p^\infty(\hat{y}) = 0$ from \eqref{uniq_greenidentity}, we obtain that $p(y)$ vanishes in $\mathbb{R}^d \setminus \overline{D}$ by Rellich's lemma(see for e.g. \cite{CCH-book}). This implies that $p^- = 0$ on $\partial D$. By assumption \eqref{assump_k}, we conclude that $p \equiv 0$ in $D$ which completes the proof.
\end{proof}

We showed the density of the set $\mathcal{U}$ in $L^2(\partial D)$. With this, we will show that the far--field pattern uniquely determines the boundary parameter under the same assumption as in Lemma \ref{uniq_density}. 
\begin{theorem}
Under the assumption \eqref{assump_k}, the knowledge of the far--field pattern $u^\infty(\hat{x}, \hat{y})$  for all $\hat{x}$ and $\hat{y} \in \mathbb{S}^{d-1}$ uniquely determines the boundary parameter $\eta \in L^\infty(\partial D)$.
\end{theorem}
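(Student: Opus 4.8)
The plan is to argue by contradiction, using the density of the trace set $\mathcal{U}$ established in Lemma \ref{uniq_density} as the decisive ingredient. Suppose $\eta_1, \eta_2 \in L^\infty(\partial D)$ both satisfy the standing assumptions of Section \ref{acbc_section_formulation} and produce the same far--field pattern $u^\infty(\hat{x},\hat{y})$ for all $\hat{x}, \hat{y} \in \mathbb{S}^{d-1}$. For each fixed incident direction $\hat{y}$, let $u_1(\cdot,\hat{y})$ and $u_2(\cdot,\hat{y})$ denote the corresponding total fields with scattered parts $u_1^s$ and $u_2^s$. Since the far--field patterns coincide, Rellich's lemma together with the unique continuation principle gives $u_1^s = u_2^s$ in $\mathbb{R}^d \setminus \overline{D}$, so the exterior total fields agree there as well. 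Taking Cauchy data on $\partial D$ from the outside then yields $u_1^+ = u_2^+$ and $\partial_\nu u_1^+ = \partial_\nu u_2^+$.

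First I would transfer this exterior agreement to the interior via the transmission conditions in \eqref{directacbc2}. The continuity condition $u_j^+ = u_j^-$ shows that the interior Dirichlet traces coincide, $u_1^- = u_2^-$ on $\partial D$. Setting $w := (u_1 - u_2)(\cdot,\hat{y})$ restricted to $D$, the difference solves $\nabla \cdot A\nabla w + k^2 n w = 0$ in $D$ with $w = 0$ on $\partial D$, hence $w \in H^1_0(D)$. Assumption \eqref{assump_k} then forces $w \equiv 0$, i.e. $u_1^- = u_2^-$ throughout $D$; in particular the co--normal derivatives match, $\nu \cdot A\nabla u_1^- = \nu \cdot A\nabla u_2^-$ on $\partial D$.

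Next I would subtract the two jump conditions $\partial_\nu u_j^+ = \nu \cdot A\nabla u_j^- - \eta_j u_j$. Using $\partial_\nu u_1^+ = \partial_\nu u_2^+$, the matching co--normal derivatives, and the fact that the total fields share a common boundary trace $u(\cdot,\hat{y})$, every term cancels except the boundary parameter contributions, leaving $(\eta_1 - \eta_2)\, u(\cdot,\hat{y}) = 0$ a.e. on $\partial D$ for every $\hat{y} \in \mathbb{S}^{d-1}$. Finally, writing $\zeta := \eta_1 - \eta_2 \in L^\infty(\partial D)$, multiplication by $\zeta$ is a bounded operator on $L^2(\partial D)$ that annihilates every element of $\mathcal{U}$ and hence, by Lemma \ref{uniq_density} and a density/limiting argument, annihilates all of $L^2(\partial D)$. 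Testing against characteristic functions gives $\zeta = 0$ a.e., i.e. $\eta_1 = \eta_2$.

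The conceptual crux is the middle step: exterior Cauchy data alone do not pin down $\eta$, and one must show that the interior fields themselves coincide so that the co--normal derivative contributions cancel in the jump relation. This is precisely where assumption \eqref{assump_k}, the uniqueness of the interior Dirichlet problem for $\nabla \cdot A\nabla\, \cdot + k^2 n\, \cdot$, is indispensable; without it the difference $w$ could be a nontrivial solution and the parameter term would not isolate. The only other point requiring care is the passage from $\zeta\, u(\cdot,\hat{y}) = 0$ on a dense set to $\zeta = 0$, which relies on the $L^\infty$--boundedness of $\zeta$ to guarantee continuity of the multiplication operator on $L^2(\partial D)$, with the density supplied by Lemma \ref{uniq_density} closing the gap.
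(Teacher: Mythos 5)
Your proposal is correct and follows essentially the same route as the paper's proof: Rellich's lemma to identify the scattered fields in the exterior, the continuity transmission condition together with assumption \eqref{assump_k} to force the interior total fields to coincide, subtraction of the two conductive jump conditions to isolate $(\eta_1-\eta_2)\,u(\cdot,\hat{y})=0$ on $\partial D$, and the density Lemma \ref{uniq_density} to conclude $\eta_1=\eta_2$ a.e. The only difference is cosmetic: in the last step you view multiplication by $\eta_1-\eta_2$ as a bounded operator on $L^2(\partial D)$ annihilating the dense set $\mathcal{U}$, whereas the paper uses $\eta_1-\eta_2\in L^\infty(\partial D)\subset L^2(\partial D)$ and tests it directly against $\mathcal{U}$; both are immediate consequences of the same lemma.
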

\begin{proof}
Let us denote $u_j (\cdot \,  , \hat{y})$ as the total field to \eqref{directacbc1}--\eqref{directacbc2} with the boundary parameter $\eta$ replaced by $\eta_j$ for each $j=1,2$, and let $u_j^s$ be the corresponding scattered field. Assuming that the far--field patterns of $u_1^\infty (\hat{x}, \hat{y}) =u_2^\infty (\hat{x}, \hat{y})$ for all $\hat{x}$ and $\hat{y} \in \mathbb{S}^{d-1}$. Then by Rellich's lemma we have that $u_1^s ({x}, \hat{y}) = u_2^s({x}, \hat{y})$ for all $x \in \mathbb{R}^d \setminus \overline{D}$ and $\hat{y} \in \mathbb{S}^{d-1}$, which implies that $u_1 (\cdot \, , \hat{y})= u_2 (\cdot \, , \hat{y})$ on $\partial D$. With assumption \eqref{assump_k}, $u_1  (\cdot \, , \hat{y})$ and $u_2  (\cdot \, , \hat{y})$ coincide in $\mathbb{R}^d$.

 Now, let us denote $u_1$ and $u_2$ by $u$. From the boundary condition,
\begin{align*}
0 = \partial_\nu u^+ - \nu \cdot A \nabla u^{-} + \eta_1 u = \partial_\nu u^+ - \nu \cdot A \nabla u^{-} + \eta_2 u \text{ on } \partial D.
\end{align*}
Then, $(\eta_1 - \eta_2) u (\cdot \, , \hat{y}) = 0$ on $\partial D$. Since $(\eta_1 - \eta_2) \in L^\infty(\partial D) \subset L^2(\partial D)$, it follows from Lemma \ref{uniq_density} that $\eta_1 = \eta_2$ a.e. on $\partial D$.
\end{proof}

With this result, we know that the inverse boundary parameter problem is unique with respect to the data. In the following section we will solve the inverse shape problem i.e. recover the unknown scatterer $D$ from the given far--field pattern/operator. 

\section{Factorization of Far Field Operator} \label{acbc_section_factorization}
In order to utilize the monotonicity method for reconstructing the region $D$, we will first need to obtain a symmetric factorization of the far--field operator. To this end, motivated by the original direct scattering problem (\ref{directacbc1})--(\ref{directacbc2}), we consider the problem of finding $u \in H^1_{loc}(\mathbb{R}^d)$ for any given $v \in H^1(D)$ such that
\begin{align}
      \nabla \cdot A\nabla u + k^2nu = \nabla \cdot (I-A)\nabla v + k^2(1-n)v \quad &\text{ in } \mathbb{R}^d  \setminus \partial D \label{reformdirectacbc1} \\ 
      u^+-u^- = 0  \quad \text{ and } \quad  \partial_\nu u^+ - \nu \cdot A\nabla u^- = \nu \cdot (A-I)\nabla v  - \eta (u+v) \quad &\text{ on } \partial D \label{reformdirectacbc2} \\
      \partial_r u -\text{i} ku = \mathcal{O}\Big(\frac{1}{r^{(d+1)/2}}\Big)  \quad &\text{ as } r \to \infty. \nonumber
\end{align}
Note that $I-A$ and $1-n$ are supported in $D$. This system is equivalent to (\ref{directacbc1})--(\ref{directacbc2}) when you use the fact the incident field solves the Helmholtz equation in all of $\R^d$. The variational formulation for the above system is given by
\begin{align}
   -\int_{B_R} A\nabla u \cdot\nabla\overline{\phi} - k^2nu\overline{\phi} \,\text{d}x &+ \int_{\partial B_R} \overline{\phi}\Lambda_k u \,\text{d}s \nonumber \\ 
   & = \int_D \nabla \overline{\phi} \cdot (A-I)\nabla v - k^2 (n-1)v\overline{\phi} \,\text{d}x - \int_{\partial D} \overline{\phi}\eta (u+v) \,\text{d}s \label{varformdirectacbc}
\end{align}
for all $\phi \in H^1(B_R)$, where $B_R$ is the open ball of radius $R$ such that $\overline{D} \subset B_R$. Here we let  
$$\Lambda_k: H^{1/2}(B_R) \longrightarrow H^{-1/2}(B_R)$$ 
denote the Dirichlet to Neumann map on $\partial B_R$ for the radiating solution to the Helmholtz equation on the exterior of $B_R$, defined as 
\begin{equation*}
    \Lambda_k f = \partial_\nu \varphi \big|_{\partial B_R} \quad \text{ where }\quad  \Delta \varphi+k^2 \varphi=0 \text{ in $\mathbb{R}^d \setminus \overline{B_R}$} \quad \text{ and } \quad \varphi\big|_{\partial B_R} = f, 
\end{equation*}
along with the radiation condition (\ref{Sommerfield_acbc}). From Theorem 5.22 of \cite{Cakoni-Colton-book} we know that $\Lambda_k$ is a bounded linear operator. This is a direct consequence of the well-posedness of the above Dirichlet problem along with the (Neumann) Trace Theorem.

To get an initial factorization of $F$ we first define the source to far--field pattern operator
\begin{align}
G: H^1(D) \longrightarrow L^2(\mathbb{S}^{d-1}) \quad \text{given by} \quad Gv = u^\infty \label{Gdefine}
\end{align}
where $u \in H^1_{loc}(\mathbb{R}^d)$ is the unique solution to \eqref{reformdirectacbc1}--\eqref{reformdirectacbc2} for a given $v \in H^1(D)$. Next we define the bounded linear operator 
\begin{align}
H: L^2(\mathbb{S}^{d-1}) \longrightarrow H^1(D) \quad \text{given by}\quad (Hg)(x) = \int_{\mathbb{S}^{d-1}} \text{e}^{\text{i}kx\cdot \hat{y}}g(\hat{y}) \, \text{d}s(\hat{y}) \Big|_D . \label{Hdefine}
\end{align}
By the superposition principle, we have that the far--field operator associated with (\ref{directacbc1})--(\ref{directacbc2}) is given by $F=GH$. In order to use the monotonicity method, we need to have a symmetric factorization of far--field operator. To achieve this, we compute the adjoint of the operator $H$.
\begin{theorem}\label{Hstardefine}
    The operator $H^*: H^1(D) \longrightarrow  L^2(\mathbb{S}^{d-1})$ is given by 
    $$H^*f=-w^\infty \quad \text{for all } \quad f \in H^1(D)$$
    where $w^\infty$ is the far--field pattern for the unique radiating solution $w$ that satisfies 
\begin{align}
        -\int_{B_R} \nabla w \cdot \nabla \overline{\phi} - k^2w\overline{\phi} \, \mathrm{d}x + \int_{\partial B_R} \overline{\phi}\Lambda_kw \, \mathrm{d}s = (f,\phi)_{H^1(D)}  \label{acbc_Hstar_varform}  
    \end{align}
    for all $\phi \in H^1(B_R)$.
\end{theorem}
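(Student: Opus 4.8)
The plan is to compute the adjoint $H^*$ directly from the definition of the Hilbert-space adjoint. We need to verify that for all $g \in L^2(\mathbb{S}^{d-1})$ and all $f \in H^1(D)$, the identity $(Hg, f)_{H^1(D)} = (g, H^*f)_{L^2(\mathbb{S}^{d-1})}$ holds with $H^*f = -w^\infty$. Since $H$ is an integral operator against the plane-wave kernel $\text{e}^{\text{i}kx\cdot \hat{y}}$, the left-hand side $(Hg, f)_{H^1(D)}$ can be written, after interchanging the order of integration (justified by Fubini together with boundedness of $H$), as an integral over $\mathbb{S}^{d-1}$ of $g(\hat{y})$ against the quantity $\big(\text{e}^{\text{i}kx\cdot \hat{y}}|_D, f\big)_{H^1(D)}$. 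The crux is therefore to identify this inner product of the plane wave with $f$ as the complex conjugate of the far-field pattern $-w^\infty(\hat{y})$.

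\medskip

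\noindent First I would fix $f \in H^1(D)$ and let $w$ be the unique radiating solution of the variational problem \eqref{acbc_Hstar_varform}; its well-posedness follows from the same Fredholm/Lax--Milgram argument used for \eqref{varformdirectacbc}, since the sesquilinear form on the left of \eqref{acbc_Hstar_varform} differs from the one governing the direct problem only in lower-order terms and the DtN contribution is unchanged. The plane wave $\text{e}^{\text{i}kx \cdot \hat{y}}$ (restricted to $D$) equals, up to a constant, the far-field pattern of the fundamental solution $\Phi(\cdot, x)$ evaluated in direction $\hat{y}$, exactly as exploited in the proof of Lemma \ref{uniq_density}. The key computation is then to test \eqref{acbc_Hstar_varform} with $\phi = \text{e}^{\text{i}kx\cdot\hat{y}}|_{B_R}$ (a Helmholtz solution in $B_R$) so that the volume term over $B_R \setminus \overline{D}$ and the boundary term combine via Green's identities, leaving precisely $\big(\text{e}^{\text{i}kx\cdot\hat{y}}|_D, f\big)_{H^1(D)}$ on the right, while the left-hand side reduces — using the DtN map, the radiation condition for $w$, and the representation of the far-field pattern as an integral over $\partial B_R$ (or $\partial D$) — to the far-field pattern $w^\infty(\hat{y})$ of $w$.

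\medskip

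\noindent More concretely, I expect to use the fact that for two radiating or incident solutions the far-field pattern can be extracted by pairing against a plane wave over a sphere, as in the integral representation $u^\infty(\hat{x}, \hat{y}) = -\int_{\partial \Omega} \text{e}^{-\text{i}k\hat{x}\cdot z}\partial_\nu u^s - u^s \partial_\nu \text{e}^{-\text{i}k\hat{x}\cdot z}\,\mathrm{d}s$ given in Section \ref{acbc_section_formulation}. Substituting $\phi = \text{e}^{\text{i}kx\cdot\hat{y}}$ into the weak form for $w$ produces, after moving the DtN term and invoking Green's second identity on $B_R \setminus \overline{D}$, exactly such a boundary integral over $\partial B_R$ representing $w^\infty(\hat y)$; the sign $-w^\infty$ emerges from the sign convention in \eqref{acbc_Hstar_varform} and in the far-field representation. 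Taking complex conjugates and inserting back into $(Hg, f)_{H^1(D)}$ then yields $\int_{\mathbb{S}^{d-1}} g(\hat{y})\, \overline{(-w^\infty(\hat{y}))}\,\mathrm{d}s(\hat{y}) = (g, -w^\infty)_{L^2(\mathbb{S}^{d-1})}$, which is the desired identity.

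\medskip

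\noindent \textbf{Main obstacle.} The delicate step is the clean identification of $\big(\text{e}^{\text{i}kx\cdot\hat{y}}|_D, f\big)_{H^1(D)}$ with (the conjugate of) $w^\infty(\hat y)$. This requires choosing the correct test function in \eqref{acbc_Hstar_varform}, carefully matching the $H^1(D)$ inner product convention used to define $H^*$ against the volume-plus-DtN structure of the weak form, and tracking the conjugations and the sign so that the factor $-w^\infty$ (rather than $+w^\infty$ or its conjugate) appears. A secondary technical point is justifying the interchange of the surface integral over $\mathbb{S}^{d-1}$ with the $H^1(D)$ pairing, which follows from the boundedness of $H$ into $H^1(D)$ and standard Fubini-type arguments, and confirming that the boundary integrals at $\partial B_R$ indeed reproduce the far-field pattern in the $R \to \infty$ limit, as in the density lemma above.
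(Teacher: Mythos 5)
Your proposal is correct and takes essentially the same approach as the paper: the paper tests the variational formulation \eqref{acbc_Hstar_varform} with the extended Herglotz wave function $\psi = Hg$, applies Green's first identity over $B_R$ using that $\psi$ solves the Helmholtz equation in all of $\R^d$, and identifies the resulting $\partial B_R$ boundary integral with $-w^\infty$ via the far--field representation formula, while your version --- testing with the individual plane waves $\phi = \text{e}^{\text{i}kx\cdot\hat{y}}$ and then integrating against $g(\hat{y})$ --- is the same computation with the $\hat{y}$--integration interchanged by Fubini. Two cosmetic points: no $R\to\infty$ limit is needed since the far--field representation over $\partial B_R$ is exact for any ball containing $\overline{D}$, and the Green identity is applied on all of $B_R$ (the weak form already encodes the jump data on $\partial D$), not on $B_R\setminus\overline{D}$.
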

\begin{proof}
    To prove the claim, we let $\psi = Hg$ be extended for all $x \in \mathbb{R}^d$. Then we have that
    \begin{align*}
        (H^*f, g)_{L^2(\mathbb{S}^{d-1})} = (f, Hg)_{H^1(D)} = -\int_{B_R} \nabla w \cdot \nabla \overline{\psi} - k^2w\overline{\psi} \, \mathrm{d}x + \int_{\partial B_R} \overline{\psi}\partial_\nu w \, \mathrm{d}s.
    \end{align*}
    Using Green's 1st Identity and along with the fact that $\psi$ is a solution to the Helmholtz equation in $\mathbb{R}^d$ we now obtain
    \begin{align*}
        (H^*f, g)_{L^2(\mathbb{S}^{d-1})} &= \int_{B_R} w\overline{(\Delta \psi + k^2 \psi)} \, \mathrm{d}x + \int_{\partial B_R} \overline{\psi}\partial_\nu w - w\partial_\nu \overline{\psi} \, \mathrm{d}s \\
        &= \int_{\partial B_R} \overline{\psi}\partial_\nu w - w\partial_\nu \overline{\psi} \, \mathrm{d}s \\
        &= \int_{\mathbb{S}^{d-1}}\overline{g(\hat{y})} \Bigg[ \int_{\partial B_R} \text{e}^{-\text{i}kx\cdot \hat{y}}\partial_\nu w - w\partial_\nu \text{e}^{-\text{i}kx\cdot \hat{y}} \, \mathrm{d}s(x) \Bigg] \, \mathrm{d}s(\hat{y}) \\
        &= -\int_{\mathbb{S}^{d-1}}\overline{g(\hat{y})} w^\infty(\hat{y}) \, \mathrm{d}s(\hat{y}) = (-w^\infty, g)_{L^2(\mathbb{S}^{d-1})}.
    \end{align*}
    This implies that $H^*f=-w^\infty$, proving the claim.
\end{proof}
Now that we have the adjoint of our operator $H$, we proceed as is done in \cite{fm-shixu,near_field} to show that $G=-H^*T$, for some operator $T$. To this end, from (\ref{reformdirectacbc1}) we have that 
$$ \Delta u + k^2u = \nabla \cdot (I-A)\nabla (u+v) + k^2(1-n)(u+v) \quad \text{ in } \mathbb{R}^d \setminus \partial D $$
where $u \in H^1_{loc}(\mathbb{R}^d)$ is the solution of (\ref{reformdirectacbc1})--(\ref{reformdirectacbc2}) for any given $v \in H^1(D)$. The corresponding variational form is given by
\begin{align}
    -\int_{B_R} \nabla u \cdot\nabla\overline{\phi} &- k^2u\overline{\phi} \, \mathrm{d}x  + \int_{\partial B_R} \overline{\phi}\Lambda_k u \, \mathrm{d}s \nonumber \\ 
    &= \int_D \nabla \overline{\phi} \cdot (A-I)\nabla (u+v) - k^2 (n-1)(u+v)\overline{\phi} \, \mathrm{d}x - \int_{\partial D} \overline{\phi}\eta (u+v) \, \mathrm{d}s \label{varform2directacbc}
\end{align}
for any $\phi$. By wellposedness of the direct scattering problem, we have that $v \longmapsto u$ is a bounded linear mapping. With this fact we can use the Riesz Representation Theorem on the right hand side of the variational form (\ref{varform2directacbc}) which implies that there exists a bounded linear operator $T: H^1(D) \longrightarrow H^1(D)$ such that for all $v \in H^1(D)$ 
\begin{align}
    (Tv, \phi)_{H^1(D)} = \int_D \nabla \overline{\phi} \cdot (A-I)\nabla (u+v) - k^2 (n-1)(u+v)\overline{\phi} \, \mathrm{d}x - \int_{\partial D} \overline{\phi}\eta (u+v) \, \mathrm{d}s. \label{acbcTdef}
\end{align}
It is clear by (\ref{varform2directacbc}), that we have
\begin{align*}
 -\int_{B_R} \nabla u \cdot\nabla\overline{\phi} - k^2u\overline{\phi} \, \mathrm{d}x + \int_{\partial B_R} \overline{\phi}\Lambda_k u \, \mathrm{d}s =   (Tv, \phi)_{H^1(D)}.
\end{align*}
By appealing to the definition of $G$ and $H^*$ as defined in \eqref{Gdefine} and \eqref{acbc_Hstar_varform} we have that 
$$u^\infty = Gv \quad \text{ and } \quad u^\infty = -H^*Tv \quad \text{ for any $v \in H^1(D)$ }$$ 
which implying $G=-H^*T$. Now, recalling that we have the initial factorization $F=GH$, which implies the desired symmetric factorization $F=-H^*TH$.
\begin{lemma}
    The far-field operator $F:L^2(\mathbb{S}^{d-1}) \longrightarrow L^2(\mathbb{S}^{d-1})$ for the scattering problem (\ref{directacbc1})--(\ref{directacbc2}) has the factorization $F=-H^*TH$.
\end{lemma}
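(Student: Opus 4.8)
The plan is to assemble the desired factorization $F = -H^*TH$ by chaining together the two component identities established in the preceding discussion: the initial factorization $F = GH$ coming from the superposition principle, and the identity $G = -H^*T$. Since $H$, $G$, $T$, and $H^*$ are all bounded linear operators, once both identities are in hand the claim follows at once from $F = GH = (-H^*T)H = -H^*TH$.

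First I would justify $F = GH$. The operator $H$ sends a density $g$ to the Herglotz wave function $v = Hg = \int_{\mathbb{S}^{d-1}} \text{e}^{\text{i}kx\cdot\hat{y}} g(\hat{y})\, \mathrm{d}s(\hat{y})$ restricted to $D$, that is, a continuous superposition of the incident plane waves $u^i(\cdot,\hat{y})$. Because the reformulated system \eqref{reformdirectacbc1}--\eqref{reformdirectacbc2} is equivalent to the original scattering problem \eqref{directacbc1}--\eqref{directacbc2} precisely when the data $v$ is an entire solution of the Helmholtz equation, the solution $u$ of \eqref{reformdirectacbc1}--\eqref{reformdirectacbc2} for $v = Hg$ is exactly the superposition of the scattered fields $u^s(\cdot,\hat{y})$ weighted by $g(\hat{y})$. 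Taking far--field patterns and using linearity then gives $G(Hg)(\hat{x}) = \int_{\mathbb{S}^{d-1}} u^\infty(\hat{x},\hat{y}) g(\hat{y})\, \mathrm{d}s(\hat{y}) = (Fg)(\hat{x})$, which is $F = GH$.

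The substantive step is the identity $G = -H^*T$, which I would establish by checking that $-H^*(Tv)$ and $Gv$ give the same far--field pattern for every $v \in H^1(D)$. By the definition \eqref{acbcTdef} of $T$, the solution $u$ of \eqref{reformdirectacbc1}--\eqref{reformdirectacbc2} satisfies $-\int_{B_R} \nabla u\cdot\nabla\overline{\phi} - k^2 u\overline{\phi}\,\mathrm{d}x + \int_{\partial B_R}\overline{\phi}\,\Lambda_k u\,\mathrm{d}s = (Tv,\phi)_{H^1(D)}$ for all $\phi \in H^1(B_R)$. This is exactly the variational problem \eqref{acbc_Hstar_varform} defining $H^*$ with right--hand side $f = Tv$; hence its unique radiating solution $w$ coincides with $u$, and Theorem \ref{Hstardefine} yields $H^*(Tv) = -w^\infty = -u^\infty = -Gv$. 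This is the point I expect to be the main obstacle: one must recognize that the weak formulation satisfied by the scattered field $u$ has the same form as the defining equation for $H^*$, so that the adjoint characterization applies with the choice $f = Tv$, and one must track carefully the sign conventions hidden in the definitions of $G$, $H^*$, and $T$.

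Substituting $G = -H^*T$ into $F = GH$ then produces the symmetric factorization $F = -H^*TH$, which is exactly the structure --- with $H^*$ and $H$ flanking the middle operator $T$ --- required by the monotonicity method in the following section.
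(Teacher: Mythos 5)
Your proposal is correct and follows essentially the same route as the paper: the initial factorization $F = GH$ via superposition, followed by the observation that the variational identity derived from \eqref{varform2directacbc} and \eqref{acbcTdef} shows $u$ solves the defining problem \eqref{acbc_Hstar_varform} for $H^*$ with $f = Tv$, so that $H^*(Tv) = -u^\infty = -Gv$ and hence $F = -H^*TH$. The sign bookkeeping you flag as the delicate point is handled exactly as in the paper.
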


With this result we have the factorization required to apply the theory of the monotonicity method to our inverse problem. In the next section we will develop the required theoretical results. This will require us to study the analytical properties of the operators $H$ and $T$ that are used to factorize the far-field operator. 


\section{The Monotonicity Method} \label{acbc_section_monomethod}
In this section, we connect the support of the scattering object $D$ to the spectrum of an operator associated with the far--field operator $F$. We recall the symmetric factorization of the far--field operator $F=-H^*TH$, this factorization is important for applying the theory of the monotonicity method. We now provide the following definition that will give some background for our analysis going forward. 

\begin{definition}
    Let $A$, $B$ be self-adjoint compact operators on a Hilbert space. We write
    $$ A \leq_{\mathrm{fin}} B$$
    If $B-A$ had finitely many negative eigenvalues.
\end{definition}
The main result of the monotonicity method that we will employ here is given in Theorem 4.2 of \cite{mono_method_paper}.  This result outlines how the number of negative eigenvalues of an operator associated with the far-field operator will increase provided that the sampling point is in the interior of the scatterer $D$. For completeness we state a version of Theorem 4.2 in \cite{mono_method_paper} for our setting. 

\begin{theorem}\label{mono-result}
(Theorem 4.2 of \cite{mono_method_paper}) Assume we have compact operators $\mathcal{F}_j :\mathbb{Y} \rightarrow \mathbb{Y}$ acting on the Hilbert space $\mathbb{Y}$ with factorizations 
$$\mathcal{F}_j = H^*_j T_j H_j \quad \text{ such that } \quad H_j : \mathbb{Y} \to \mathbb{X}_j \quad \text{ and } \quad T_j : \mathbb{X}_j \to \mathbb{X}_j$$
with $T_j $ and $H_j$ being bounded linear operators with Hilbert spaces $\mathbb{X}_j$ for $j=1$ and 2.   
\begin{enumerate}
\item Assume that 
\begin{enumerate}
\item $\mathrm{Re}(T_1)$ is the sum of a positive coercive operator and self-adjoint compact operator.
\item There exists a compact operator $R : \mathbb{X}_1 \to \mathbb{X}_2$ such that $H_2 = R H_1$.
\end{enumerate} 
$$\text{Then we have that $ \mathrm{Re}(\mathcal{F}_2) \leq_{\mathrm{fin}} \mathrm{Re}(\mathcal{F}_1). $}$$\\

\item Assume that
\begin{enumerate}
\item $\mathrm{Re}(T_2)$ is the sum of a positive coercive operator and self-adjoint compact operator.
\item $\mathrm{Ran}(H_1^*) \cap \mathrm{Ran}(H_2^*) = \{ 0 \}$ where Dim$\big(\mathrm{Ran}(H_2^*)\big)=\infty$. 
\end{enumerate} 
$$\text{Then we have that $ \mathrm{Re}(\mathcal{F}_2) \nleq_{\mathrm{fin}} \mathrm{Re}(\mathcal{F}_1). $}$$
\end{enumerate} 
\end{theorem}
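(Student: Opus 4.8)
The plan is to reduce everything to one observation: since $\mathcal{F}_j = H_j^* T_j H_j$, taking adjoints gives $\mathcal{F}_j^* = H_j^* T_j^* H_j$, and therefore
$$\mathrm{Re}(\mathcal{F}_j) = \tfrac{1}{2}\big(\mathcal{F}_j + \mathcal{F}_j^*\big) = H_j^* \, \mathrm{Re}(T_j) \, H_j.$$
Because each $\mathcal{F}_j$ is compact, so is $\mathrm{Re}(\mathcal{F}_1) - \mathrm{Re}(\mathcal{F}_2)$; hence its negative spectrum consists of isolated eigenvalues of finite multiplicity accumulating only at $0$, so ``$\leq_{\mathrm{fin}}$'' is well defined, and the number of negative eigenvalues equals the maximal dimension of a subspace on which the quadratic form is negative. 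I would then isolate two elementary lemmas and apply them to the two cases.

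For part 1, I would use $H_2 = R H_1$, hence $H_2^* = H_1^* R^*$, to write
$$\mathrm{Re}(\mathcal{F}_1) - \mathrm{Re}(\mathcal{F}_2) = H_1^* \big( \mathrm{Re}(T_1) - R^* \mathrm{Re}(T_2) R \big) H_1 =: H_1^* M H_1.$$
Since $R$ is compact, $R^* \mathrm{Re}(T_2) R$ is a compact self-adjoint operator, so the hypothesis that $\mathrm{Re}(T_1)$ is coercive-plus-compact forces $M$ to be a coercive positive operator plus a compact self-adjoint one. By Weyl's theorem its essential spectrum lies in $[\gamma,\infty)$ for some $\gamma>0$, so $M$ has only finitely many negative eigenvalues; I would then write $M = M_+ - M_-$ with $M_+ \geq 0$ and $M_- \geq 0$ of finite rank $r$. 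Thus $H_1^* M H_1 = H_1^* M_+ H_1 - H_1^* M_- H_1$ is a positive semidefinite operator minus a positive operator of rank at most $r$. The first elementary lemma — if $P \geq 0$ and $0 \leq Q$ has rank $r$ then $P-Q$ has at most $r$ negative eigenvalues, because any subspace on which $P-Q$ is negative meets $\ker Q$ trivially and so has dimension $\leq r$ — yields $\mathrm{Re}(\mathcal{F}_2) \leq_{\mathrm{fin}} \mathrm{Re}(\mathcal{F}_1)$.

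For part 2, I would argue by contradiction: if $\mathrm{Re}(\mathcal{F}_1) - \mathrm{Re}(\mathcal{F}_2)$ had only finitely many negative eigenvalues, the form inequality $\langle \mathrm{Re}(T_2) H_2 g, H_2 g\rangle \leq \langle \mathrm{Re}(T_1) H_1 g, H_1 g\rangle$ would hold on a finite-codimension subspace $V$. I would bound the right side by $\|\mathrm{Re}(T_1)\|\,\|H_1 g\|^2$, and split $\mathrm{Re}(T_2) = C_2 + K_2$ with $C_2 \geq \gamma_2 I$ and $K_2$ compact; choosing a finite-dimensional $Z \subset \mathbb{X}_2$ with $\langle K_2 \phi,\phi\rangle \geq -\tfrac{\gamma_2}{2}\|\phi\|^2$ for $\phi \perp Z$ and restricting $g$ further to the finite-codimension subspace where $H_2 g \perp Z$ (the kernel of the finite-rank map $g \mapsto P_Z H_2 g$), I would obtain $\langle \mathrm{Re}(T_2) H_2 g, H_2 g\rangle \geq \tfrac{\gamma_2}{2}\|H_2 g\|^2$. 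This produces an estimate $\|H_2 g\| \leq C \|H_1 g\|$ valid on a subspace $V'$ of finite codimension $m$. The second lemma is the finite-codimension version of Harrach's range estimate: such an inequality forces $\mathrm{Ran}(H_2^*) \subseteq \mathrm{Ran}(H_1^*) + F$ with $\dim F \leq m$. Together with $\mathrm{Ran}(H_1^*) \cap \mathrm{Ran}(H_2^*) = \{0\}$, the induced map $\mathrm{Ran}(H_2^*) \to \big(\mathrm{Ran}(H_1^*)+F\big)/\mathrm{Ran}(H_1^*)$ is injective, so $\dim \mathrm{Ran}(H_2^*) \leq m < \infty$, contradicting $\dim \mathrm{Ran}(H_2^*) = \infty$ and giving $\mathrm{Re}(\mathcal{F}_2) \nleq_{\mathrm{fin}} \mathrm{Re}(\mathcal{F}_1)$.

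The main obstacle is part 2. The delicate points are (i) the two successive reductions to finite-codimension subspaces that neutralize the compact part $K_2$ of $\mathrm{Re}(T_2)$ while retaining control of $\|H_2 g\|$, and (ii) the finite-codimension range lemma itself. Its codimension-zero base case is the standard fact that $\mathrm{Ran}(A^*) \subseteq \mathrm{Ran}(B^*)$ iff $\|A g\| \leq C \|B g\|$ for all $g$; upgrading this to permit a finite-dimensional defect $F$ with a bound on $\dim F$ is the genuinely technical step, since the operator ranges need not be closed. By comparison, part 1 is routine once one observes that $R^* \mathrm{Re}(T_2) R$ is compact.
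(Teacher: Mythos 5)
The paper never proves this theorem: it is imported verbatim as Theorem 4.2 of \cite{mono_method_paper}, so the only meaningful comparison is with the proof in that reference (which in turn follows the scheme of \cite{mono-invscat-media}). Your proposal is correct and reconstructs essentially that argument: the identity $\mathrm{Re}(\mathcal{F}_j) = H_j^*\mathrm{Re}(T_j)H_j$, part 1 via $H_2 = RH_1$ and the compactness of $R^*\mathrm{Re}(T_2)R$, and part 2 by contradiction through a form inequality on a finite-codimension subspace followed by a Douglas-type range comparison against $\mathrm{Ran}(H_1^*) \cap \mathrm{Ran}(H_2^*) = \{0\}$. Two remarks. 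First, in part 1 your spectral split $M = M_+ - M_-$ with $M_-$ of finite rank $r$, combined with the rank lemma, is a mild variant of the usual route, which instead restricts to the finite-codimension subspace $\{g : P_W H_1 g = 0\}$ with $W$ spanned by the eigenvectors of the compact part of $M$ with eigenvalues below $-\gamma/2$ — exactly the device you deploy in part 2; both are valid because, for a compact self-adjoint operator, the number of negative eigenvalues counted with multiplicity equals the maximal dimension of a subspace on which the quadratic form is negative definite. Second, the step you single out as the genuine obstacle — the finite-codimension range lemma — is shorter than you fear: let $P$ be the orthogonal projection onto the closed finite-codimension subspace $V'$; then $\|H_2 P g\| \leq C\|H_1 P g\|$ holds for \emph{all} $g \in \mathbb{Y}$, so the codimension-zero (Douglas-type) lemma yields $\mathrm{Ran}(PH_2^*) \subseteq \mathrm{Ran}(PH_1^*)$, and writing $PH_2^*\phi = PH_1^*\psi$ gives $H_2^*\phi = H_1^*\psi - (I-P)H_1^*\psi + (I-P)H_2^*\phi \in \mathrm{Ran}(H_1^*) + (V')^\perp$, so one may simply take $F = (V')^\perp$ with $\dim F = m$. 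No closedness of the operator ranges is needed, and your purely algebraic quotient argument then closes the contradiction, so the proposal stands as a complete and faithful rendering of the cited proof.
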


We note that the real part of a compact operator $\mathcal{F}: \mathbb{Y} \rightarrow \mathbb{Y}$ is given by 
$$\mathrm{Re}(\mathcal{F}) = \frac{1}{2} \left( \mathcal{F} + \mathcal{F}^*\right).$$
Notice, that this is always a self-adjoint compact operator. To apply the above result to our problem we first notice we already have the needed factorization $F=-H^*TH$. The next thing to accomplish is to show that $\pm \mathrm{Re}(T)$ is the sum of a positive coercive operator and self-adjoint compact operator where $T$ is defined by \eqref{acbcTdef}. 

Going forward, we want to prove that we can apply the monotonicity method result under the analytical assumption that 
\begin{align*}
    \text{Re}(A)-I>0 \quad \text{uniformly in } D
\end{align*}
or for some constant $\alpha >0$
\begin{align*}
    I-\text{Re}(A)-\alpha \text{Im}(A)>0 \quad \text{uniformly in }D \,\, \textrm{ and} \quad \text{Re}(A)-\frac{1}{\alpha} \text{Im}(A)\geq 0 \quad \text{in }D.
\end{align*}
The notation $M>0$ ($M\geq 0)$ denotes that the matrix $M$ is a positive definite (semi-positive definite) matrix. With this in mind we will prove that the operator $T$ has the desired properties to apply the above result. 

Recall, the definition of the operator $T: H^1(D) \longrightarrow H^1(D)$ for any $v \in H^1(D)$ 
\begin{align*}
    (Tv, \phi)_{H^1(D)} = \int_D \nabla \overline{\phi} \cdot (A-I)\nabla (u+v) - k^2 (n-1)(u+v)\overline{\phi} \,\text{d}x - \int_{\partial D} \overline{\phi}\eta (u+v) \,\text{d}s 
\end{align*}
where $u \in H^1_{loc}(\mathbb{R}^d)$ satisfies the auxiliary problem (\ref{reformdirectacbc1})--(\ref{reformdirectacbc2}) with $v \in H^1(D)$ given. Therefore, we have that for any $ v_j$ there exists a unique $u_j$ that satisfies the variational formulation (\ref{varform2directacbc}) for $j$=1 and 2. By definition we have that 
\begin{align*}
    (Tv_1, v_2)_{H^1(D)} &= \int_D \nabla \overline{v_2} \cdot (A-I)\nabla (u_1+v_1) - k^2 (n-1)(u_1+v_1)\overline{v_2} \,\text{d}x - \int_{\partial D} \overline{v_2}\eta (u_1+v_1) \,\text{d}s \\
    &= \int_D \nabla \overline{(u_2+v_2)} \cdot (A-I)\nabla (u_1+v_1) - k^2 (n-1)(u_1+v_1)\overline{(u_2+v_2)} \,\text{d}x   \\
    &\hspace{1in} - \int_D \nabla \overline{u_2} \cdot (A-I)\nabla (u_1+v_1) - k^2 (n-1)(u_1+v_1)\overline{u_2} \,\text{d}x  \\
    &\hspace{1in} - \int_{\partial D} \overline{(u_2+v_2)}\eta (u_1+v_1) \,\text{d}s+ \int_{\partial D} \overline{u_2}\eta (u_1+v_1) \,\text{d}s 
\end{align*}
Now using the variational form (\ref{varform2directacbc}) corresponding to $(u_1,v_1)$ with $\phi=u_2$ we have 
\begin{align}
-\int_{B_R} \nabla u_1 \cdot\nabla\overline{u_2} - k^2u_1\overline{u_2} \,\text{d}x &+ \int_{\partial B_R} \overline{u_2}\Lambda_ku_1 \,\text{d}s \nonumber \\ 
    &\hspace{-1in}= \int_D \nabla \overline{u_2} \cdot (A-I)\nabla (u_1+v_1) - k^2 (n-1)(u_1+v_1)\overline{u_2} \,\text{d}x - \int_{\partial D} \overline{u_2}\eta (u_1+v_1) \,\text{d}s \nonumber
\end{align}
Plugging this into the expression for $T$ we get
\begin{align*}
(Tv_1, v_2)_{H^1(D)} &= \int_D \nabla \overline{(u_2+v_2)} \cdot (A-I)\nabla (u_1+v_1) - k^2 (n-1)(u_1+v_1)\overline{(u_2+v_2)} \,\text{d}x \\
    &- \int_{\partial D} \eta (u_1+v_1)\overline{(u_2+v_2)} \,\text{d}s + \int_{B_R} \nabla u_1 \cdot\nabla\overline{u_2} - k^2u_1\overline{u_2} \,\text{d}x - \int_{\partial B_R} \overline{u_2}\Lambda_k u_1 \,\text{d}s
\end{align*}
With these calculations, we show that our operator $T$ has the desired decomposition for us to use the monotonicity method given by Theorem \ref{mono-result}. 

\begin{theorem}\label{T=coercive+compact}
    Let $T: H^1(D) \longrightarrow H^1(D)$ be as defined in \eqref{acbcTdef}. 
    \begin{enumerate}
        \item If $\mathrm{Re}(A)-I>0$ uniformly in $D$, then $\mathrm{Re}(T)$ is the sum of a positive coercive operator and self-adjoint compact operator.
        \item If $I-\mathrm{Re}(A)-\alpha \mathrm{Im}(A)>0$ uniformly in $D$ and $\mathrm{Re}(A)-\frac{1}{\alpha}\mathrm{Im}(A) \geq 0$ in $D$, then $-\mathrm{Re}(T)$ is the sum of a positive coercive operator and self-adjoint compact operator.
    \end{enumerate}
\end{theorem}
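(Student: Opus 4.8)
The plan is to evaluate the sesquilinear form $(Tv,v)_{H^1(D)}$ using the symmetric expression derived just above the theorem (taking $v_1=v_2=v$, so that $u_1=u_2=u$, and writing $w=u+v$), and to read off from it a splitting of $\pm\mathrm{Re}(T)=\tfrac12(\pm T \pm T^*)$ into a coercive piece and a compact piece. Since $(\mathrm{Re}(T)v,v)_{H^1(D)}=\mathrm{Re}(Tv,v)_{H^1(D)}$ and, for a symmetric matrix $M$, $\mathrm{Re}(\overline{\xi}\cdot M\xi)=\overline{\xi}\cdot\mathrm{Re}(M)\xi$, taking the real part isolates the gradient term $\int_D \nabla\overline{w}\cdot(\mathrm{Re}(A)-I)\nabla w\,\mathrm{d}x$, the interior energy $\int_{B_R}|\nabla u|^2\,\mathrm{d}x$, and the boundary term $-\mathrm{Re}\int_{\partial B_R}\overline{u}\Lambda_k u\,\mathrm{d}s$ as the principal contributions. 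Everything else — the volume terms in $|w|^2$ and $|u|^2$, the trace term $\int_{\partial D}\mathrm{Re}(\eta)|w|^2\,\mathrm{d}s$, and the remainder $\Lambda_k-T_0$ of the Dirichlet-to-Neumann map (with $T_0$ as in Theorem 5.22 of \cite{Cakoni-Colton-book}) — is lower order and will be shown to be compact. Here I use that $v\mapsto u$ is bounded from $H^1(D)$ into $H^1(B_R)$ by well-posedness, that $H^1\hookrightarrow L^2$ and the trace $H^1(D)\to L^2(\partial D)$ are compact, and the elementary fact that a bounded Hermitian sesquilinear form which factors through a compact embedding is induced by a self-adjoint compact operator.

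For part (1), the assumption $\mathrm{Re}(A)-I\ge c_0 I$ makes the first principal term coercive, $\int_D \nabla\overline{w}\cdot(\mathrm{Re}(A)-I)\nabla w\,\mathrm{d}x\ge c_0\|\nabla w\|_{L^2(D)}^2$. Together with $\int_{B_R}|\nabla u|^2\,\mathrm{d}x\ge\|\nabla u\|_{L^2(D)}^2$ and the coercivity $-\mathrm{Re}\int_{\partial B_R}\overline{u}T_0 u\,\mathrm{d}s\gtrsim\|u\|_{H^{1/2}(\partial B_R)}^2\ge 0$, I would bound the principal part below by $c(\|\nabla w\|_{L^2(D)}^2+\|\nabla u\|_{L^2(D)}^2)$. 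Because $\nabla v=\nabla w-\nabla u$, this controls $\|\nabla v\|_{L^2(D)}^2$; adding the compact form $\|v\|_{L^2(D)}^2$ upgrades it to coercivity in the full $H^1(D)$-norm modulo a compact form. Defining $C$ through the coercive form (Lax--Milgram) and $K$ through the compact remainder yields $\mathrm{Re}(T)=C+K$ with $C$ positive coercive and $K$ self-adjoint compact.

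Part (2) is the crux. Now $\mathrm{Re}(A)-I$ has the wrong sign, in $-\mathrm{Re}(Tv,v)$ the interior energy enters as $-\int_{B_R}|\nabla u|^2\,\mathrm{d}x$, which cannot simply be dropped, and $I-\mathrm{Re}(A)$ need not even be positive on its own. The device is to bring in the imaginary part. The sign hypotheses $\overline{\xi}\cdot\mathrm{Im}(A)\xi\le 0$, $\mathrm{Im}(n)\ge 0$, $\mathrm{Im}(\eta)\ge 0$, together with $\mathrm{Im}\int_{\partial B_R}\overline{u}\Lambda_k u\,\mathrm{d}s\ge 0$ for radiating solutions, give $\mathrm{Im}(Tv,v)\le 0$, whose principal gradient term is $\int_D \nabla\overline{w}\cdot\mathrm{Im}(A)\nabla w\,\mathrm{d}x$. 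I would calibrate the weight by $\alpha$ and study the combination in which the gradient coefficient on $\nabla w$ becomes exactly $I-\mathrm{Re}(A)-\alpha\,\mathrm{Im}(A)$, which is positive definite by the first hypothesis of (2). The second hypothesis $\mathrm{Re}(A)-\tfrac1\alpha\mathrm{Im}(A)\ge 0$ is precisely what keeps the accompanying interior-field contributions sign-compatible: rewriting the exterior energy $\int_{B_R\setminus\overline{D}}|\nabla u|^2\,\mathrm{d}x$ and the Dirichlet-to-Neumann term via Green's identity in $B_R\setminus\overline{D}$ as a flux $\int_{\partial D}\overline{u}\,\partial_\nu u^+\,\mathrm{d}s$, and then converting that flux into interior integrals over $D$ through the transmission condition \eqref{reformdirectacbc2} and the equation \eqref{reformdirectacbc1}, this second condition ensures the residual $u$-terms do not destroy coercivity in $\nabla v$.

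The step I expect to be the main obstacle is exactly this control of the interior field energy $\int_{B_R}|\nabla u|^2\,\mathrm{d}x$ in part (2): unlike in part (1) it enters with an unfavorable sign, and $u$ is only bounded (not compactly) in terms of $v$, so $\nabla u$ cannot be treated as a free variable. Resolving it requires the two sign conditions and the $\alpha$-weighted real/imaginary combination to act in tandem, reducing the form — modulo the compact lower-order terms — to one coercive in $\|\nabla v\|_{L^2(D)}$, after which the passage to full $H^1(D)$-coercivity and the splitting $-\mathrm{Re}(T)=C+K$ proceed exactly as in part (1).
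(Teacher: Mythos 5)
Your part (1) is essentially the paper's argument: the same symmetric representation of $(Tv_1,v_2)_{H^1(D)}$, the same split into a principal gradient part plus lower-order terms handled by the compact embeddings $H^1\hookrightarrow L^2$ and the compact trace; your direct coercivity estimate (and your use of the $T_0$-splitting of $\Lambda_k$) replaces the paper's contradiction argument and its direct use of the non-positivity of $\mathrm{Re}(\Lambda_k)$, which is a cosmetic difference. Part (2), however, has a genuine gap, and it sits exactly at the step you yourself flag as the main obstacle. Your proposed device — exploiting $\mathrm{Im}(Tv,v)_{H^1(D)}\leq 0$ and forming an $\alpha$-weighted combination of $-\mathrm{Re}$ and $\mathrm{Im}$ — cannot control the interior energy: the form $\mathrm{Im}(Tv,v)_{H^1(D)}$ contains no $|\nabla u|^2$ contribution at all (the term $\int_{B_R}|\nabla u|^2\,\mathrm{d}x$ is real), so adding any multiple of it changes only the coefficient on $\nabla w$ and leaves $-\int_{B_R}|\nabla u|^2\,\mathrm{d}x$ in $-\mathrm{Re}(Tv,v)_{H^1(D)}$ untouched. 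Your fallback — Green's identity in $B_R\setminus\overline{D}$ plus the transmission condition \eqref{reformdirectacbc2} and the equation \eqref{reformdirectacbc1} to convert the exterior energy and the DtN flux into interior integrals — actually makes things worse: carrying it out, the flux contributes $+\int_D\nabla\overline{u}\cdot\mathrm{Re}(A)\nabla u\,\mathrm{d}x$, which cancels the $\nabla u$-energy completely (net coefficient zero) while generating the residual cross term $-\mathrm{Re}\int_D\nabla\overline{v}\cdot(\mathrm{Re}(A)-I)\nabla u\,\mathrm{d}x$. With no positive quadratic term in $\nabla u$ remaining, Young's inequality has nothing to absorb this into; and since $v\mapsto u$ is bounded but not compact in $H^1$, the residual can neither be dominated nor relegated to the compact part $K$. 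Note also that the hypotheses of (2) give no smallness of $\mathrm{Re}(A)-I$, so this cross term is not controlled by them.

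The paper resolves this by keeping, rather than eliminating, the exterior and DtN terms: it derives a \emph{second} symmetric representation of $-T$ by substituting into \eqref{acbcTdef} the complex conjugate of the variational identity \eqref{varformdirectacbc} for the pair $(v_2,u_2)$ tested with $\phi=u_1$. The conjugation is the decisive trick: in the resulting expression for $(-\mathrm{Re}(T)v_1,v_2)_{H^1(D)}$ the $u$-energy appears with the \emph{favorable} sign and coefficient $+\mathrm{Re}(A)$ over all of $B_R$, and the only principal coupling between $\nabla v$ and $\nabla u$ carries the coefficient $\mathrm{Im}(A)$ (the $\mathrm{Re}(A)$-parts of the cross terms cancel; the $\mathrm{Im}(n)$ and $\mathrm{Im}(\eta)$ couplings are lower order and go into $K$). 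Young's inequality with weight $\alpha$ then distributes this $\mathrm{Im}(A)$ cross term onto the two energies, producing exactly the matrices $I-\mathrm{Re}(A)-\alpha\,\mathrm{Im}(A)$ on $\nabla v$ and $\mathrm{Re}(A)-\frac{1}{\alpha}\mathrm{Im}(A)$ on $\nabla u$ whose positivity is the hypothesis of (2); coercivity of $S$ in $H^1(D)$ follows at once from the retained $\|v\|^2_{L^2(D)}$ term. So you correctly predicted which coefficient combinations must appear, but the mechanism that produces them is a conjugated re-substitution of the variational identity, not a sign-definite $\mathrm{Im}$-combination or a flux-to-interior conversion; as written, your part (2) does not close.
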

\begin{proof}
(1) Using Riesz Representation Theorem on the above calculations for the operator $T$, we can define the following bounded linear operators $S$, $K:H^1(D)\longrightarrow H^1(D)$ as follows
\begin{align}
    (Sv_1,v_2)_{H^1(D)} &= \int_D \nabla \overline{(u_2+v_2)} \cdot (A-I)\nabla (u_1+v_1) + (u_1+v_1)\overline{(u_2+v_2)} \,\text{d}x \nonumber \\
    &\hspace{0.5in}+ \int_{B_R} \nabla u_1 \cdot\nabla\overline{u_2} + u_1\overline{u_2} \,\text{d}x - \int_{\partial B_R} \overline{u_2}\Lambda_k u_1 \,\text{d}s \label{acbcSpart1}
\end{align}
and
\begin{align}
    (Kv_1,v_2)_{H^1(D)} &= - \int_D k^2(n-1)(u_1+v_1)\overline{(u_2+v_2)} \,\text{d}x - \int_D (u_1+v_1)\overline{(u_2+v_2)} \,\text{d}x \nonumber \\
    &\hspace{0.5in}- \int_{\partial D} \eta (u_1+v_1)\overline{(u_2+v_2)} \,\text{d}s + \int_{B_R} (k^2+1)u_1\overline{u_2} \,\text{d}x. \label{acbcKpart1}
\end{align}
With this it is clear that $T=S+K$ which implies that $\text{Re}(T) = \text{Re}(S)+\text{Re}(K)$. Notice, by virtue of the compact embeddings $H^1(D)$ into $L^2(D)$, $H^1(B_R)$ into $ L^2(B_R)$, and $H^{1/2}(\partial D)$ into $ L^2(\partial D)$(see for e.g. \cite{salsa}), we have that $K$, and thus $\text{Re}(K)$ is also compact operator. It is also clear to see that $\text{Re}(K)$ is a self-adjoint operator. \\
To complete the proof, we will show that $\text{Re}(S)$ is a positive coercive operator provided that $\text{Re}(A)-I>0$ uniformly in $D$. To this end, notice that 
\begin{align}
    (\text{Re}(S)v_1,v_1)_{H^1(D)} &= \int_D \nabla \overline{(u_1+v_1)} \cdot (\text{Re}(A)-I)\nabla (u_1+v_1) + |u_1+v_1|^2 \,\text{d}x \nonumber \\
    &\hspace{0.5in}+ \int_{B_R} |\nabla u_1|^2 + |u_1|^2 \,\text{d}x - \int_{\partial B_R} \overline{u_1} \text{Re}(\Lambda_k)u_1 \,\text{d}s \label{acbcRe(S)part1}
\end{align}
where $\text{Re}(\Lambda_k)$ is a non-positive operator (see for e.g. \cite{CCH-book}). We want to show that $\text{Re}(S)$ is positive coercive i.e. there is a $\beta>0$ independent of $v_1 \in H^1(D)$ such that 
$$ (\text{Re}(S)v_1,v_1)_{H^1(D)} \geq \beta ||v_1||^2_{H^1(D)}.$$
To this end, we proceed by considering a contradiction argument. We assume $\text{Re}(S)$ has no such constant, then there exists a sequence $v^n \in H^1(D)$ with corresponding $u^n \in H^1_{loc}(\mathbb{R}^d)$ satisfying (\ref{reformdirectacbc1})--(\ref{reformdirectacbc2}) such that 
$$||v^n||_{H^1(D)}=1 \quad \text{and} \quad (\text{Re}(S)v^n , v^n)_{H^1(D)} \leq \frac{1}{n} .$$
Since $\text{Re}(A)-I$ is positive definite and $\text{Re}(\Lambda_k)$ is non-positive, as $n \rightarrow \infty$, we have 
$$(\text{Re}(S)v^n,v^n)_{H^1(D)} \rightarrow 0 \quad \text{implying} \quad u^n \rightarrow 0 \quad \text{and} \quad v^n \rightarrow 0  \quad \text{in } H^1(D) $$
This contradicts the normalization of $v^n$ in $H^1(D)$. Therefore, $\text{Re}(S)$ is a positive coercive operator. This proves the claim for $\text{Re}(A)-I>0$  uniformly in $D$. \\

(2) Since we have new assumptions on the coefficients, we will have to derive a new variational expression of the operator $T$. To this end, recall that by \eqref{acbcTdef} for a given $v_j \in H^1(D)$ with corresponding $u_j \in H^1_{loc}(\mathbb{R}^d)$ satisfying (\ref{reformdirectacbc1})--(\ref{reformdirectacbc2}) we have that
\begin{align}
    (Tv_1, v_2)_{H^1(D)} &= \int_D \nabla \overline{v_2} \cdot (A-I)\nabla (u_1+v_1) - k^2 (n-1)(u_1+v_1)\overline{v_2} \,\text{d}x - \int_{\partial D} \eta (u_1+v_1) \overline{v_2} \,\text{d}s \nonumber \\
    &= \int_D \nabla \overline{v_2} \cdot (A-I)\nabla v_1 - k^2 (n-1)v_1\overline{v_2} \,\text{d}x - \int_{\partial D} \eta (u_1+v_1) \overline{v_2} \,\text{d}s \nonumber \\
    &\hspace{1.5in} + \int_D \nabla \overline{v_2} \cdot (A-I)\nabla u_1 - k^2 (n-1)u_1\overline{v_2} \,\text{d}x. \nonumber
\end{align}
In an effort to rewrite the above expression for $T$, we use the variational form (\ref{varformdirectacbc}) with respect to $(v_2,u_2)$ and let $\phi = u_1$, obtaining
\begin{align*}
-\int_{B_R} A\nabla u_2 \cdot\nabla\overline{u_1} &- k^2nu_2\overline{u_1} \,\text{d}x + \int_{\partial B_R} \overline{u_1}\Lambda_ku_2 \,\text{d}s \\ 
&= \int_D \nabla \overline{u_1} \cdot (A-I)\nabla v_2 - k^2 (n-1)v_2\overline{u_1} \,\text{d}x - \int_{\partial D} \eta (u_2+v_2)\overline{u_1} \,\text{d}s 
\end{align*}
By taking the conjugate of the above equation, we can rewrite the definition of $-T$ we obtain
\begin{align*}
    (-Tv_1,v_2)_{H^1(D)} &=  \int_D \nabla \overline{v_2} \cdot (I-A)\nabla v_1 + k^2 (n-1)v_1\overline{v_2} \,\text{d}x + \int_{\partial D} \eta v_1 \overline{v_2} \,\text{d}s  \\
    &\hspace{0.5in}- \int_D \nabla \overline{v_2} \cdot (A-\overline{A})\nabla u_1 - k^2 (n-\overline{n})u_1\overline{v_2} \,\text{d}x + \int_{\partial D} (\eta - \overline{\eta}) u_1 \overline{v_2} \,\text{d}s \\
    &\hspace{0.5in}+  \int_{B_R} \nabla \overline{u_2} \cdot \overline{A}\nabla u_1 - k^2\overline{n}u_1\overline{u_2} \,\text{d}x - \int_{\partial D} \overline{\eta} u_1 \overline{u_2} \,\text{d}s - \int_{\partial B_R} u_1\overline{\Lambda_ku_2} \,\text{d}s.  
\end{align*}
Now, we compute $-(\text{Re}(T)v_1,v_2)_{H^1(D)}= -\frac{1}{2}\big((T+T^*)v_1,v_2 \big)_{H^1(D)}$ we can determine 
$$(T^*v_1,v_2)_{H^1(D)} =\overline{(T v_2, v_1)}_{H^1(D)}$$
by the above calculations to obtain
\begin{align*}
(-\text{Re}(T)v_1,v_2)_{H^1(D)} &=  \int_D \nabla \overline{v_2} \cdot (I-\text{Re}(A))\nabla v_1 + k^2 (\text{Re}(n)-1)v_1\overline{v_2} \,\text{d}x + \int_{\partial D} \text{Re}(\eta) v_1 \overline{v_2} \,\text{d}s  \\
   &+  \int_{B_R} \nabla \overline{u_2} \cdot \text{Re}(A)\nabla u_1 - k^2 \text{Re}(n)u_1\overline{u_2} \,\text{d}x - \int_{\partial D} \text{Re}(\eta) u_1 \overline{u_2} \,\text{d}s \\
    &- \frac{1}{2}\int_{\partial B_R} \overline{u_2}\Lambda_k u_1 + u_1\overline{\Lambda_ku_2} \, \text{d}s  \\
    &- \text{i}\int_D \nabla \overline{v_2} \cdot \text{Im}(A)\nabla u_1 - k^2 \text{Im}(n)u_1\overline{v_2} \,\text{d}x + \text{i}\int_{\partial D} \text{Im}(\eta) u_1 \overline{v_2} \,\text{d}s \\
    &+ \text{i}\int_D \nabla \overline{u_2} \cdot \text{Im}(A)\nabla v_1 - k^2 \text{Im}(n)v_1\overline{u_2} \,\text{d}x - \text{i}\int_{\partial D} \text{Im}(\eta) v_1 \overline{u_2} \,\text{d}s.
\end{align*}
Using Riesz Representation Theorem on the definition of the operator $-\text{Re}(T)$, we can define the following bounded linear operators $S,K:H^1(D)\longrightarrow H^1(D)$ as
\begin{align}
(Sv_1,v_2)_{H^1(D)} &=  \int_D \nabla \overline{v_2} \cdot (I-\text{Re}(A))\nabla v_1 + v_1\overline{v_2} \,\text{d}x \nonumber\\
    &+ \int_{B_R} \nabla \overline{u_2} \cdot \text{Re}(A)\nabla u_1 \,\text{d}x - \frac{1}{2}\int_{\partial B_R} \overline{u_2}\Lambda_ku_1 + u_1\overline{\Lambda_ku_2}\,\text{d}s \nonumber \\
    &- \text{i}\int_D \nabla \overline{v_2} \cdot \text{Im}(A)\nabla u_1 \,\text{d}x + \text{i}\int_D \nabla \overline{u_2} \cdot \text{Im}(A)\nabla v_1 \,\text{d}x
\end{align}
and 
$$ (Kv_1,v_2)_{H^1(D)} = (-\text{Re}(T)v_1,v_2)_{H^1(D)} - (Sv_1,v_2)_{H^1(D)} . $$
Notice that $(Kv_1,v_2)_{H^1(D)}$ contains only $L^2$ terms in $D$, $B_R$, and on $\partial D$, therefore the compactness of $K$ follows similarly to the other case. It is also clear by the variational formulation that $K$ is a self-adjoint operator.\\

Now we wish to show that $S$ is a positive coercive operator. To this end, first we use Young's Inequality to get the estimate
$$\Big| \int_D \nabla \overline{v} \cdot \text{Im}(A)\nabla u \,\text{d}x \Big| \leq \frac{\alpha}{2}(\text{Im}(A)\nabla v, \nabla v)_{H^1(D)} + \frac{1}{2\alpha}(\text{Im}(A)\nabla u, \nabla u)_{H^1(D)}$$
for any $\alpha >0$. Therefore we have 
\begin{align*}
        (Sv_1,v_1)_{H^1(D)} &\geq \Big( \big[ I-\text{Re}(A)-\alpha \text{Im}(A) \big]\nabla v_1, \nabla v_1 \Big)_{L^2(D)} + (v_1,v_1)_{L^2(D)} \\
    &+ \Big( \big[ \text{Re}(A)-\frac{1}{\alpha} \text{Im}(A) \big]\nabla u_1, \nabla u_1 \Big)_{H^1(D)} - \int_{\partial B_R} \overline{u_1}\text{Re}(\Lambda_k)u_1 \, \text{d}s \\
    &\geq C||v_1||^2_{H^1(D)}
\end{align*}
        meaning that we have completed this part of the proof provided $\alpha>0$ is a constant such that $I-\text{Re}(A)-\alpha \text{Im}(A)>0$ uniformly in $D$ and $\text{Re}(A)-\frac{1}{\alpha}\text{Im}(A) \geq 0$ in $D$.
\end{proof}

Now, we turn attention to satisfying Theorem \ref{mono-result}. To this end, we need to find another operator with a similar factorization to the far--field operator $F$. With this in mind, being motivated by the work in \cite{mono-invscat-media} we now define $B:=B(z,\epsilon)$ to be the ball of radius $\epsilon >0$ centered at the point $z\in \mathbb{R}^d$, and the operator
$$ H_B : L^2(\mathbb{S}^{d-1}) \longrightarrow L^2(B) \quad \text{ given by } \quad (H_{B} g)(x) = \int_{\mathbb{S}^{d-1}} \text{e}^{\text{i}kx\cdot \hat{y}}g(\hat{y}) \, \text{d}s(\hat{y}) \Big|_B . $$
Notice, that when $B(z,\epsilon) \subset D$ the operator $H_B$ is the restriction of the operator $H$ defined in \eqref{Hdefine} on the ball $B(z,\epsilon) \subset D$ in the $L^2(B)$. This implies that $H_B = R_B H$ where 
\begin{align} \label{R_BDefinition}
R_B: H^1(D) \rightarrow L^2(B) \quad R_B f = f\big|_B \quad \text{provided that $B \subset D$}
\end{align}
is the restriction operator onto $B(z,\epsilon)$. Again, by appealing to the compact embedding of $H^1$ into $L^2$ we have that $R_B$ is a compact operator. 

With this in mind, we now have just about all we need to state the main result. The last piece of the puzzle to apply Theorem \ref{mono-result} to our scattering problem is to consider the intersection of the range of $H^*$ and $H^*_B$. In order to apply the result we need to show that the corresponding ranges are disjoint provided that the `sampling ball' $B=B(z,\epsilon)$ and $D$ are disjoint. This will ultimately allow us to test whether or not the `sampling point' $z\in \mathbb{R}^d$ is in the support of the scatterer $D$. 

\begin{theorem}\label{mono-disjoint-range}
    If for a given sampling point $z\in \mathbb{R}^d$ and radius $\epsilon>0$ we have that $B(z,\epsilon) \cap D = \emptyset$ then $\mathrm{Ran}(H^*_B) \cap \mathrm{Ran}(H^*) = \{0\}$.
\end{theorem}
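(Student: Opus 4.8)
The plan is to characterize both ranges as far--field patterns of radiating Helmholtz solutions with sources confined to $D$ and to $B$ respectively, and then to argue that a shared far--field pattern forces a single entire radiating field, which must vanish. First I would fix an arbitrary $\psi \in \mathrm{Ran}(H^*_B) \cap \mathrm{Ran}(H^*)$ and unpack what membership in each range means. By Theorem \ref{Hstardefine} there is an $f \in H^1(D)$ with $\psi = H^* f = -w^\infty$, where $w \in H^1_{loc}(\R^d)$ is the radiating solution of the variational problem \eqref{acbc_Hstar_varform}; testing \eqref{acbc_Hstar_varform} against $\phi \in H^1(B_R)$ supported away from $\overline{D}$ makes the right--hand side vanish, so $\Delta w + k^2 w = 0$ in $\R^d \setminus \overline{D}$ and $w$ satisfies the radiation condition. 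By the exact analogue of Theorem \ref{Hstardefine} for $H_B$ (replacing $(f,\phi)_{H^1(D)}$ by an $L^2(B)$ pairing), there is likewise an $h \in L^2(B)$ with $\psi = H_B^* h = -w_B^\infty$, where $w_B$ is radiating and solves $\Delta w_B + k^2 w_B = 0$ in $\R^d \setminus \overline{B}$. Thus every element of $\mathrm{Ran}(H^*)$ (resp. $\mathrm{Ran}(H_B^*)$) is the far field of a radiating Helmholtz solution defined in the exterior of $D$ (resp. of $B$).

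Next, since $w^\infty = -\psi = w_B^\infty$, the two radiating fields share a far--field pattern on the common exterior region $\R^d \setminus (\overline{D} \cup \overline{B})$. On the unbounded connected component both $w$ and $w_B$ are radiating Helmholtz solutions with identical far fields, so Rellich's lemma (see \cite{CCH-book}) gives $w \equiv w_B$ there. Because $D$ is simply connected and $B$ is a ball, $\R^d \setminus (\overline{D} \cup \overline{B})$ is connected for $d = 2,3$, and the equality $w = w_B$ propagates by unique continuation (solutions of the Helmholtz equation are real--analytic) to all of $\R^d \setminus (\overline{D} \cup \overline{B})$.

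I would then glue the two fields. Assuming $\overline{D}$ and $\overline{B}$ are disjoint compact sets, the domains $\R^d \setminus \overline{D}$ and $\R^d \setminus \overline{B}$ cover $\R^d$, and on their overlap the fields agree, so
$$ W := \begin{cases} w & \text{in } \R^d \setminus \overline{D}, \\ w_B & \text{in } \R^d \setminus \overline{B} \end{cases} $$
is well defined, solves the Helmholtz equation on all of $\R^d$, and is radiating since it coincides with $w$ near infinity. An entire solution of the Helmholtz equation that satisfies the Sommerfeld radiation condition is identically zero: expanding $W$ in regular spherical wave functions $j_n(k|x|)$ times spherical harmonics and imposing the radiation condition forces every coefficient to vanish. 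Hence $W \equiv 0$, so $w \equiv 0$ in $\R^d \setminus \overline{D}$, giving $w^\infty = 0$ and $\psi = 0$. As $\psi$ was arbitrary, $\mathrm{Ran}(H^*_B) \cap \mathrm{Ran}(H^*) = \{0\}$.

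The main obstacle is the gluing step, which genuinely needs $\overline{D} \cap \overline{B} = \emptyset$ so that $W$ is defined and smooth across the strip separating the two sets; the hypothesis only gives $B(z,\epsilon) \cap D = \emptyset$, which permits $\partial B$ to touch $\partial D$. In that degenerate case the covering $(\R^d \setminus \overline{D}) \cup (\R^d \setminus \overline{B})$ misses the contact set and one cannot immediately conclude $W$ is entire. I would dispose of this by noting that the touching configuration is a boundary case for the sampling point $z$ that is irrelevant to the method's dichotomy, or by handling the contact set as a compact removable set using the $H^1_{loc}$ regularity of $W$ together with unique continuation. The other point worth stating precisely is the fact invoked at the end — that an entire radiating Helmholtz solution is trivial — since that is where the construction pays off and the intersection collapses to $\{0\}$.
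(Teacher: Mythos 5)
Your proof is correct and follows essentially the same route as the paper's: use Theorem \ref{Hstardefine} (and its $L^2(B)$ analogue from \cite{mono-invscat-media}) to realize each range element as the far field of a radiating solution of the Helmholtz equation in $\R^d \setminus \overline{D}$ and $\R^d \setminus \overline{B}$ respectively, apply Rellich's lemma to identify the two fields in the common exterior, and glue them into an entire radiating solution, which must vanish. If anything, you are more careful than the paper, which neither addresses the tangential configuration $\partial B \cap \partial D \neq \emptyset$ that you flag nor states the vanishing of entire radiating solutions explicitly, and whose displayed gluing assigns $u_D$ (rather than $u_B$) to $D$ --- an apparent typo, since only $u_B$ solves the Helmholtz equation there.
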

\begin{proof}
    To begin, we will assume $B(z,\epsilon) \cap D = \emptyset$, and let $g \in \mathrm{Ran}(H^*_B) \cap \mathrm{Ran}(H^*)$. By Theorem \ref{Hstardefine} we have that for some $f_1 \in H^1(D)$ by \eqref{acbc_Hstar_varform} there exists $u_D \in H^1_{loc}(\R^d)$ such that $g=H^*f_1= u_D^\infty$ that satisfy
    $$ \Delta u_D + k^2u_D = 0 \quad \text{in }\mathbb{R}^d\setminus \overline{D}.$$
    Similarly, we have that for some $f_2 \in L^2(B)$ just as in \cite{mono-invscat-media} there exists $u_B \in H^1_{loc}(\R^d)$ such that $g=H_B^*f_2=u_B^\infty$ that satisfy
    $$ \Delta u_B + k^2 u_B = 0 \quad \text{in }\mathbb{R}^d\setminus \overline{B}.$$
     Here both $u_D$ and $u_B$ satisfy the radiation condition \eqref{Sommerfield_acbc}. By Rellich's Lemma, we have that $u_B = u_D$ in $\R^d \setminus \big(\overline{D} \cup \overline{B}\big)$. With this, we can now define $w \in H^1_{loc}(\mathbb{R}^d)$ as
    \begin{equation*}
        w = \begin{cases}
        u_D & \quad \text{in } D, \\
        u_B & \quad \text{in }  \mathbb{R}^d\setminus \overline{D}.
    \end{cases}
    \end{equation*}
    Notice, that $w$ is a radiating solution to Helmholtz equation in all of $\mathbb{R}^d$, and thus $w=0$ in $\mathbb{R}^d$. This implies that $0=w^\infty=u_D^\infty$, proving the claim since $g=u_D^\infty=0$.
\end{proof}

Notice, that by the above results we can apply Theorem \ref{mono-result} to our inverse problem. Indeed, let us consider the case when $\text{Re}(A)-I>0$. Then we would have that in Theorem \ref{mono-result} we consider
$$\mathcal{F}_1 = -F =  H^* T H \quad \text{and} \quad \mathcal{F}_2 = H^*_B H_B.$$ 
By Theorem \ref{T=coercive+compact} gives that $\text{Re}(T)$ is the sum of a positive coercive operator and a self-adjoint compact operator. Recall, that $H_B = R_B H$ and by the definition given in \eqref{R_BDefinition} we have that $R_B$ is compact. Therefore, by appealing to (1) of Theorems \ref{mono-result} and \ref{T=coercive+compact} we have that provided the sampling ball $B(z,\epsilon) \subset D$ then
$$H^*_B H_B \leq_{\mathrm{fin}} - \text{Re}(F).$$
Now, by appealing to (2) of Theorem \ref{mono-result} as well as Theorem \ref{mono-disjoint-range} we have that provided the sampling ball $B(z,\epsilon) \cap D=\emptyset$ then 
$$H^*_B H_B \nleq_{\mathrm{fin}} - \text{Re}(F).$$
Here we use that fact that $H_B$ is injective which implies that $H^*_B$ has a dense range i.e. the range is infinite dimensional. Similarly for the case when $I-\text{Re}(A)-\alpha \text{Im}(A)>0$ uniformly in $D$ and $\text{Re}(A)-\frac{1}{\alpha}\text{Im}(A) \geq 0$ in $D$. For this case we note that in Theorem \ref{mono-result} we let 
$$\mathcal{F}_1 = F =  H^*(- T ) H \quad \text{and} \quad \mathcal{F}_2 = H^*_B H_B.$$ 
This gives the main result of the paper that derives a monotonicity based reconstruction method to recover the unknown scatterer.  

\begin{theorem}\label{aniso-mono-method}
    Let $D \subset \mathbb{R}^d$ be a simply connected open set with a Lipschitz boundary and $B=B(z,\epsilon)$ be the ball of radius $\epsilon$ centered at $z \in \mathbb{R}^d$, we have the following:
    \begin{enumerate}
        \item If $\mathrm{Re}(A)-I>0$, then  
        $$ B(z,\epsilon) \subset D \quad \iff \quad H_B^*H_B \leq_{\mathrm{fin}} -\mathrm{Re}(F).$$
        \item If $I-\mathrm{Re}(A)-\alpha \mathrm{Im}(A)>0$ and $\mathrm{Re}(A)-\frac{1}{\alpha}\mathrm{Im}(A) \geq 0$ for some $\alpha >0$, then 
        $$ B(z,\epsilon) \subset D \quad \iff \quad H_B^*H_B \leq_{\mathrm{fin}} \mathrm{Re}(F). $$
    \end{enumerate}
\end{theorem}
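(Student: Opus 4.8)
The plan is to verify, separately in the two sign regimes, the hypotheses of Theorem~\ref{mono-result} by matching our symmetric factorization $F=-H^*TH$ to its abstract template, and taking the ball operator $H_B^*H_B$---whose middle factor is the identity---as the second operator $\mathcal{F}_2$. Since that middle factor is $\mathrm{Id}$, we have $\mathrm{Re}(\mathrm{Id})=\mathrm{Id}$, which is trivially the sum of a positive coercive operator and a (zero) self-adjoint compact operator; hence the hypotheses of Theorem~\ref{mono-result} that concern $\mathcal{F}_2$ (namely $(1a)$ and $(2a)$ applied to the ball factorization) are automatic throughout. Each equivalence is then proved by treating the forward implication $B\subset D\Rightarrow\ \leq_{\mathrm{fin}}$ with part $(1)$ and the converse by contraposition with part $(2)$.

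For the forward implication in case $(1)$, I set $\mathcal{F}_1=-F=H^*TH$ with $H_1=H$, $T_1=T$, and $\mathcal{F}_2=H_B^*H_B$ with $H_2=H_B$. Hypothesis $(1a)$ holds because $\mathrm{Re}(A)-I>0$ makes $\mathrm{Re}(T)$ coercive-plus-compact by part $(1)$ of Theorem~\ref{T=coercive+compact}; hypothesis $(1b)$ holds because $B\subset D$ gives $H_B=R_BH$ with $R_B$ the restriction operator of \eqref{R_BDefinition}, which is compact by the embedding $H^1(D)\hookrightarrow L^2(B)$. Theorem~\ref{mono-result}$(1)$ then yields $\mathrm{Re}(\mathcal{F}_2)\leq_{\mathrm{fin}}\mathrm{Re}(\mathcal{F}_1)$, i.e.\ $H_B^*H_B\leq_{\mathrm{fin}}-\mathrm{Re}(F)$, since $H_B^*H_B$ is self-adjoint. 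Case $(2)$ is identical after writing $\mathcal{F}_1=F=H^*(-T)H$ and invoking part $(2)$ of Theorem~\ref{T=coercive+compact} to obtain that $-\mathrm{Re}(T)$ is coercive-plus-compact, giving $H_B^*H_B\leq_{\mathrm{fin}}\mathrm{Re}(F)$.

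The converse is where the real work lies, and it is the step I expect to be the main obstacle. Arguing by contraposition, I must show $B\not\subset D\Rightarrow H_B^*H_B\nleq_{\mathrm{fin}}\mathrm{Re}(\mathcal{F}_1)$, with $\mathcal{F}_1=-F$ in case $(1)$ and $\mathcal{F}_1=F$ in case $(2)$. When $B\cap D=\emptyset$ this is immediate: Theorem~\ref{mono-disjoint-range} gives $\mathrm{Ran}(H^*)\cap\mathrm{Ran}(H_B^*)=\{0\}$, the range of $H_B^*$ is infinite dimensional because $H_B$ is injective (a Herglotz wave function vanishing on the open ball $B$ vanishes identically by analyticity, forcing the kernel to be zero), and so Theorem~\ref{mono-result}$(2)$ applies. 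The genuinely delicate case is partial overlap, $B\cap D\neq\emptyset$ yet $B\not\subset D$, which Theorem~\ref{mono-disjoint-range} does not cover directly.

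To close this gap the plan is to localize to a disjoint sub-ball. Since $B\not\subset D$ and, for a Lipschitz domain, $B\subset\overline{D}$ would force $B\subset D$, the open set $B\setminus\overline{D}$ is nonempty, so there is a small ball $B'\subset B$ with $B'\cap D=\emptyset$. The disjoint case just handled gives $H_{B'}^*H_{B'}\nleq_{\mathrm{fin}}\mathrm{Re}(\mathcal{F}_1)$. On the other hand, writing $v_g(x)=\int_{\mathbb{S}^{d-1}}\text{e}^{\text{i}kx\cdot\hat{y}}g(\hat{y})\,\mathrm{d}s(\hat{y})$ for the Herglotz wave function with kernel $g$, one has $(H_{B'}^*H_{B'}g,g)=\int_{B'}|v_g|^2\,\mathrm{d}x\leq\int_{B}|v_g|^2\,\mathrm{d}x=(H_B^*H_Bg,g)$, so $H_{B'}^*H_{B'}\leq H_B^*H_B$ in the quadratic-form order and in particular $H_{B'}^*H_{B'}\leq_{\mathrm{fin}}H_B^*H_B$. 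Using that $\leq_{\mathrm{fin}}$ is transitive---the negative spectral subspace of a sum of two self-adjoint compact operators has dimension at most the sum of the two negative-subspace dimensions---an assumption $H_B^*H_B\leq_{\mathrm{fin}}\mathrm{Re}(\mathcal{F}_1)$ would give $H_{B'}^*H_{B'}\leq_{\mathrm{fin}}\mathrm{Re}(\mathcal{F}_1)$, contradicting the previous line. This forces $H_B^*H_B\nleq_{\mathrm{fin}}\mathrm{Re}(\mathcal{F}_1)$ and, together with the forward implication, establishes both equivalences; the only nontrivial ingredients beyond the bookkeeping are the form-monotonicity in the sub-ball and the transitivity of $\leq_{\mathrm{fin}}$.
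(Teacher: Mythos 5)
Your proposal is correct, and for the implications the paper actually argues it follows the same route: the forward direction via part (1) of Theorem \ref{mono-result} applied to $\mathcal{F}_1=\mp F=H^*(\pm T)H$ and $\mathcal{F}_2=H_B^*H_B$, with Theorem \ref{T=coercive+compact} giving hypothesis (a) and the compact restriction $R_B$ from \eqref{R_BDefinition} giving $H_2=R_BH_1$; and the disjoint case of the converse via part (2) of Theorem \ref{mono-result}, Theorem \ref{mono-disjoint-range}, and injectivity of $H_B$ (analyticity of Herglotz functions) to get $\dim\mathrm{Ran}(H_B^*)=\infty$. Where you genuinely go beyond the paper is the partial-overlap case $B\cap D\neq\emptyset$ with $B\not\subset D$: the text preceding Theorem \ref{aniso-mono-method} only establishes $H_B^*H_B\nleq_{\mathrm{fin}}\mp\mathrm{Re}(F)$ when $B\cap D=\emptyset$, so the paper's own argument is incomplete for the stated equivalence, exactly as you suspected. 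Your patch is sound: since $D$ is Lipschitz, $\mathrm{int}(\overline{D})=D$, so $B\not\subset D$ forces $B\setminus\overline{D}$ to contain a ball $B'$ with $B'\cap D=\emptyset$; the form inequality $(H_{B'}^*H_{B'}g,g)=\int_{B'}|v_g|^2\,\mathrm{d}x\leq\int_{B}|v_g|^2\,\mathrm{d}x=(H_B^*H_Bg,g)$ is valid for the $L^2(B)$ version of $H_B$ used in Section \ref{acbc_section_monomethod} (the $H^1(B)$ variant appears only in the numerics, and there the same monotonicity would hold after adding the gradient term); and your dimension-count lemma --- that the negative spectral subspace of a sum of self-adjoint compact operators has dimension at most the sum of the two negative-subspace dimensions, proved by intersecting a putative negative-definite subspace with the orthogonal complements of the two negative spectral subspaces --- correctly delivers the transitivity of $\leq_{\mathrm{fin}}$ needed to conclude $H_B^*H_B\nleq_{\mathrm{fin}}\mp\mathrm{Re}(F)$. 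In short, your write-up reproduces the paper's argument where it exists and repairs a genuine gap in the paper's proof of its own main theorem; the only caution is to state explicitly which target space ($L^2(B)$ versus $H^1(B)$) is used for $H_B$, since the sub-ball monotonicity step depends on it.
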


Theorem \ref{aniso-mono-method} implies that the eigenvalues of the self-adjoint operator $ \pm \mathrm{Re}(F)-H_B^*H_B$ can be used to recover $D$. In the next section, we will provide some numerical examples of this to recover circular regions in two dimensions with respect to noisy data.


\section{Numerical Results} \label{acbc_section_numerics}
In previous sections, we have proved that we can use the monotonicity method to perform reconstructions of the scatterer $D$.  Now, we will provide numerical examples to validate our theoretical results. It is worth noting that the monotonicity method given in Theorem \ref{aniso-mono-method} is a qualitative reconstruction method that is valid for all wave number $k$.  Unlike the linear sampling and factorization methods that have to avoid wave numbers that are so--called transmission eigenvalues. Notice, that even though the monotonicity method is theoretically valid for all wave numbers, one does need to have the {\it a priori} information on the matrix--valued coefficient to apply this method.

In order to apply the monotonicity method to recover $D$, we need to compute $H^*_BH_B$ where the sampling ball $B=B(z,\epsilon)$. To this end, in the following subsection we derive an integral representation for the operator $H^*_BH_B$. We will do the calculation in both 2 and 3 dimensions. Recall, that the operator 
$$H^*_BH_B: L^2(\mathbb{S}^{d-1}) \longrightarrow L^2(\mathbb{S}^{d-1})$$
so we will write it as an integral operator with an explicit kernel function.

\subsection{Computing the Integral Representation of $H^*_BH_B$}
In this section, we provide the calculations for writing $H^*_B H_B$ as an integral operator mapping $L^2(\mathbb{S}^{d-1})$ into itself. Note, that here we consider $H_B$ as an operator mapping into $H^1(B)$ which implies that $H^*_B$ is given in Theorem \ref{Hstardefine} and is associated with the variational formulation given in \eqref{acbc_Hstar_varform}. This is due to the fact that in our investigation  the reconstructions are more accurate for this case. Recall, that $H^*_{B}f=-w^\infty$ where $w$ solves \eqref{acbc_Hstar_varform} for any given $f \in H^1(D)$. To this end, we will need to determine the strong form of the PDE associated with the equivalent variational formulation given in  \eqref{acbc_Hstar_varform}.

By appealing to Green's Identities we have that $w\in H^1_{loc}(\mathbb{R}^d)$ satisfying the variational formulation \eqref{acbc_Hstar_varform} will be the solution to 
\begin{align*}
    \Delta w + k^2w = f- \Delta f \quad \text{ in } B=B(z_j, \epsilon) \quad &\text{ and }\quad  \Delta w + k^2w = 0 \quad \text{ in } \mathbb{R}^d \setminus \overline{B} \\
    w^+ - w^- = 0 \quad &\text{ and }\quad \partial_\nu w^+ - \partial_\nu w^- = \partial_\nu f \quad \text{ on } \partial B
\end{align*}
for any $f \in H^1(B)$ such that $\Delta f \in L^2(B)$, closed with the Sommerfeld radiation condition. From this, we get the expression for $H^*_B$, given by
\begin{align*}
    (H^*_{B}) f(\hat{x}) = -w^\infty (\hat{x})= \int_{B} \text{e}^{-\text{i}k\hat{x}\cdot z} (f-\Delta f) \, \text{d}z + \int_{\partial B} \text{e}^{-\text{i}k\hat{x}\cdot z} \partial_\nu f \, \text{d}s(z) 
\end{align*}
just as in \cite{regfm2}. Now, using the fact that $H_{B}g$ is a smooth solution to the Helmholtz equation in $B$ we have that 
\begin{align*}
(H^*_{B}H_{B}) g (\hat{x}) &= \int_{B} \text{e}^{-\text{i}k\hat{x}\cdot z} (H_{B}g - \Delta H_{B}g) \, \text{d}z + \int_{\partial B} \text{e}^{-\text{i}k\hat{x}\cdot z} \partial_\nu H_B g \, \text{d}s(z) \\
&= \int_{B} \text{e}^{-\text{i}k\hat{x}\cdot z} (k^2+1)(H_{B}g) \, \text{d}z + \int_{\partial B} \text{e}^{-\text{i}k\hat{x}\cdot z} \partial_\nu (H_{B} g) \, \text{d}s(z).
\end{align*}
In order to proceed, we will need to evaluate the volume and surface integral above.

With this, we will start by considering the volume above to obtain that 
\begin{align*}
 \int_{B} \text{e}^{-\text{i}k\hat{x}\cdot z} (k^2+1)(H_{B}g) \, \text{d}z &= \int_{B} \text{e}^{-\text{i}k\hat{x}\cdot z} (k^2+1)\Bigg[ \int_{\mathbb{S}^{d-1}} \text{e}^{\text{i}k\hat{y}\cdot z}g(\hat{y}) \, \text{d}s(\hat{y})\Bigg] \, \text{d}z \\
    &= (k^2+1) \int_{\mathbb{S}^{d-1}} \Bigg[ \int_{B} \text{e}^{-\text{i}kz\cdot (\hat{x}-\hat{y})} \, \text{d}z \Bigg] g(\hat{y}) \, \text{d}s(\hat{y})
\end{align*}
Using polar/spherical coordinates to represent the inner integral over $B = B(z_j, \epsilon)$, we obtain that  
\begin{align*}
\int_{B} \text{e}^{-\text{i}kz\cdot (\hat{x}-\hat{y})} \,\text{d}z &= \text{e}^{-\text{i}kz_j\cdot (\hat{x}-\hat{y})} \int_{B(0,\epsilon)} \text{e}^{-\text{i}kz\cdot (\hat{x}-\hat{y})} \, \text{d}z  \\
    &=  \text{e}^{-\text{i}kz_j\cdot (\hat{x}-\hat{y})} \int_{\mathbb{S}^{d-1}} \int_0^\epsilon \text{e}^{-\text{i}kz\cdot (\hat{x}-\hat{y})} r^{d-1} \, \text{d}r \, \text{d}s(\hat{z}) \\
    &= \text{e}^{-\text{i}kz_j\cdot (\hat{x}-\hat{y})}  \int_0^\epsilon \Bigg[ \int_{\mathbb{S}^{d-1}} \text{e}^{-\text{i}k\hat{z}\cdot r(\hat{x}-\hat{y})} \, \text{d}s(\hat{z}) \Bigg] r^{d-1} \, \text{d}r 
\end{align*}
where $z=r\hat{z}$ and $r=|z|$. Recall, the Funk--Hecke integral identities(see for e.g. \cite{nf-harris}) that are given as follows
\begin{align*}
     \int_{\mathbb{S}^{d-1}} \text{e}^{-\text{i}k\hat{z}\cdot r(x-y)} \, \text{d}s(\hat{z}) = \begin{cases}
        2\pi J_0(kr|x-y|) & \quad \text{for } d=2, \\ \\
        4\pi j_0(kr|x-y|) & \quad \text{for } d=3
    \end{cases}
\end{align*}
where $J_m$ are Bessel functions of the first kind of order $m$ and  $j_m$ are Spherical Bessel functions of the first kind of order $m$. Using well known recurrence relationships for Bessel functions we can calculate the following anti--derivates 
$$ \int tJ_0(t)\, \text{d}t = tJ_1(t) \quad \text{and} \quad  \int t^2j_0(t)\, \text{d}t = t^2j_1(t) .$$
Therefore, for $d=2$ we have
\begin{align*}
    \int_{B} \text{e}^{-\text{i}kz\cdot (\hat{x}-\hat{y})} \, \text{d}z &= \text{e}^{-\text{i}kz_j\cdot (\hat{x}-\hat{y})} \int_0^\epsilon J_0(kr|\hat{x}-\hat{y}|) r \, \text{d}r \\
    &= \text{e}^{-\text{i}kz_j\cdot (\hat{x}-\hat{y})} \Bigg( \frac{2\pi\epsilon}{k|\hat{x}-\hat{y}|} J_1(k\epsilon|\hat{x}-\hat{y}|) \Bigg) \\
    &= \text{e}^{-\text{i}kz_j\cdot (\hat{x}-\hat{y})} \pi \epsilon^2 \Bigg( J_0(k\epsilon|\hat{x}-\hat{y}|) + J_2(k \epsilon |\hat{x}-\hat{y}|) \Bigg)
\end{align*}
where we have used the recurrence relationship $(2/t)J_1(t) = J_0(t)+J_2(t)$ along with a substitution in the above calculations. We obtain that the volume integral is given by 
\begin{align*}
    &\int_{B} \text{e}^{-\text{i}k\hat{x}\cdot z} (k^2+1)(H_{B}g) \, \text{d}z \\
    &\hspace{1in}= (k^2+1)\int_{\mathbb{S}^1} \Bigg[ \text{e}^{-\text{i}kz_j\cdot (\hat{x}-\hat{y})} \pi \epsilon^2 \Bigg( J_0(k\epsilon|\hat{x}-\hat{y}|) + J_2(k\epsilon|\hat{x}-\hat{y}|) \Bigg) \Bigg]g(\hat{y})\, \text{d}s(\hat{y}) .
\end{align*}
Similar calculation for $d=3$, we obtain that 
\begin{align*}
    &\int_{B} \text{e}^{-\text{i}k\hat{x}\cdot z} (k^2+1)(H_{B}g) \, \text{d}z \\
    &\hspace{1in}= (k^2+1)\int_{\mathbb{S}^2} \Bigg[ \text{e}^{-\text{i}kz_j\cdot (\hat{x}-\hat{y})} \frac{4}{3}\pi \epsilon^3 \Bigg( j_0(k\epsilon|\hat{x}-\hat{y}|) + j_2(k\epsilon|\hat{x}-\hat{y}|) \Bigg) \Bigg]g(\hat{y})\, \text{d}s(\hat{y}) 
\end{align*}
by appealing to the recurrence relationship $(3/t)j_1(t) = j_0(t)+j_2(t)$ along with a substitution in the above calculations.

Now, we turn our attention to the surface integral to continue our derivation of an integral representation of $H^*_{B}H_{B}$. Notice that
\begin{align*}
    \int_{\partial B} \text{e}^{-\text{i}k\hat{x}\cdot z} (\partial_\nu H_{B}g) \,\text{d}s(z) &= \int_{\partial B} \text{e}^{-\text{i}k\hat{x}\cdot z} \Bigg[ \int_{\mathbb{S}^{d-1}} \partial_\nu \text{e}^{\text{i}kz\cdot\hat{y}} g(\hat{y}) \,\text{d}s(\hat{y}) \Bigg] \,\text{d}s(z) \\
    & = \int_{\partial B} \text{e}^{-\text{i}k\hat{x}\cdot z} \Bigg[ \int_{\mathbb{S}^{d-1}} \partial_r \text{e}^{\text{i}kr\hat{z}\cdot\hat{y}} g(\hat{y})\,\text{d}s(\hat{y}) \Bigg] \,\text{d}s(z) \\
    &= \int_{\partial B} \text{e}^{-\text{i}k\hat{x}\cdot z} \Bigg[ \int_{\mathbb{S}^{d-1}} (\text{i}k\hat{z}\cdot\hat{y}) \text{e}^{\text{i}kr \hat{z}\cdot\hat{y}} g(\hat{y})\,\text{d}s(\hat{y}) \Bigg] \,\text{d}s(z) 
\end{align*}   
Here we have used the fact that we are integrating over $\partial B$, which would imply that $\partial_\nu = \partial_r$ when written in polar/spherical coordinates. Continuing with the calculations we see that 
\begin{align*} 
  \int_{\partial B} \text{e}^{-\text{i}k\hat{x}\cdot z} (\partial_\nu H_{B}g) \,\text{d}s(z)  &= \int_{\mathbb{S}^{d-1}} \Bigg[ \int_{\partial B} (\text{i}k\hat{z}\cdot\hat{y})\text{e}^{-\text{i}k {z}\cdot (\hat{x}-\hat{y})} \text{d}s(z) \Bigg]g(\hat{y})\,\text{d}s(\hat{y}) \\
    &= \int_{\mathbb{S}^{d-1}} \Bigg[ \text{e}^{-\text{i}kz_j\cdot (\hat{x}-\hat{y})}\int_{\partial B(0,\epsilon)} (\text{i}k\hat{z}\cdot\hat{y})\text{e}^{-\text{i}kz\cdot (\hat{x}-\hat{y})} \text{d}s(z) \Bigg]g(\hat{y})\,\text{d}s(\hat{y}) \\
    &= \int_{\mathbb{S}^{d-1}} \Bigg[ \text{e}^{-\text{i}kz_j\cdot (\hat{x}-\hat{y})}\text{i}k\hat{y}\epsilon^{d-1}\cdot\int_{\mathbb{S}^{d-1}} \hat{z}\text{e}^{-\text{i}k\hat{z}\cdot \epsilon(\hat{x}-\hat{y})} \,\text{d}s(\hat{z}) \Bigg]g(\hat{y})\,\text{d}s(\hat{y})
\end{align*}
We now use the following Funk–Hecke integral identities(see for e.g. \cite{nf-harris})
\begin{align*}
     \int_{\mathbb{S}^{d-1}} \hat{z}\text{e}^{-\text{i}k\hat{z}\cdot r(x-y)} \, \text{d}s(\hat{z}) = \begin{cases}
        2\pi \frac{(x-y)}{\text{i}|x-y|}J_1(kr|x-y|) & \quad \text{for } d=2, \\ \\
        4\pi \frac{(x-y)}{\text{i}|x-y|}j_1(kr|x-y|) & \quad \text{for } d=3
    \end{cases}
\end{align*}
to get the following expression
\begin{align*}
     \int_{\partial B} (\text{i}k\hat{z}\cdot\hat{y})\text{e}^{-\text{i}kz\cdot (\hat{x}-\hat{y})} \,\text{d}s(z) &= 2\pi \text{e}^{-\text{i}kz_j\cdot (\hat{x}-\hat{y})} \text{i} k \epsilon \hat{y} \cdot  \frac{(\hat{x}-\hat{y})}{\text{i}|\hat{x}-\hat{y}|}J_1(k\epsilon|\hat{x}-\hat{y}|) \\
     &=  \text{e}^{-\text{i}kz_j\cdot (\hat{x}-\hat{y})}\pi k^2\epsilon^2 (\hat{x}\cdot\hat{y}-1) \Bigg( J_0(k\epsilon|\hat{x}-\hat{y}|) +  J_2(k\epsilon|\hat{x}-\hat{y}|) \Bigg)
\end{align*}
for $d=2$. Notice, that we have used that $\hat{y} \cdot (\hat{x}-\hat{y}) = \hat{x}\cdot\hat{y}-1$ to now get the expression
\begin{align*}
    &\int_{\partial B} \text{e}^{-\text{i}k\hat{x}\cdot z} (\partial_\nu H_{_B}g) \, \text{d}s(z) \\
    &\hspace{0.5in}= \int_{\mathbb{S}^{1}} \Bigg[  \text{e}^{-\text{i}kz_j\cdot (\hat{x}-\hat{y})}\pi k^2\epsilon^2 (\hat{x}\cdot\hat{y}-1) \Bigg( J_0(k\epsilon|\hat{x}-\hat{y}|) +  J_2(k\epsilon|\hat{x}-\hat{y}|) \Bigg) \Bigg]g(\hat{y})\, \text{d}s(\hat{y}) .
\end{align*}
Again, similar calculations for $d=3$, we have that
\begin{align*}
    &\int_{\partial B} \text{e}^{-\text{i}k\hat{x}\cdot z} \partial_\nu H_{B}g \, \text{d}s(z) \\
    &\hspace{0.5in}= \int_{\mathbb{S}^{2}} \Bigg[  \text{e}^{-\text{i}kz_j\cdot (\hat{x}-\hat{y})}\frac{4}{3}\pi k^2\epsilon^3 (\hat{x}\cdot\hat{y}-1) \Bigg( j_0(k\epsilon|\hat{x}-\hat{y}|) +  j_2(k\epsilon|\hat{x}-\hat{y}|) \Bigg) \Bigg]g(\hat{y})\, \text{d}s(\hat{y}) .
\end{align*}
With this, we can get the integral representation for $H^*_{B}H_{B}$, given by
\begin{align*}
    (H^*_{B}H_{B})g(\hat{x}) =  \int_{\mathbb{S}^{1}} \Bigg[  \text{e}^{-\text{i}kz_j\cdot (\hat{x}-\hat{y})} \pi\epsilon^2 (1+k^2\hat{x}\cdot\hat{y}) \Bigg( J_0(k\epsilon|\hat{x}-\hat{y}|) +  J_2(k\epsilon|\hat{x}-\hat{y}|) \Bigg) \Bigg]g(\hat{y})\, \text{d}s(\hat{y}) 
\end{align*}
for $d=2$. Similarly, by combining our calculations we have 
\begin{align*}
    (H^*_{B}H_{B})g(\hat{x})  = \int_{\mathbb{S}^{2}} \Bigg[  \text{e}^{-\text{i}kz_j\cdot (\hat{x}-\hat{y})}\frac{4}{3}\pi \epsilon^3 (1+k^2\hat{x}\cdot\hat{y}) \Bigg( j_0(k\epsilon|\hat{x}-\hat{y}|) +  j_2(k\epsilon|\hat{x}-\hat{y}|) \Bigg) \Bigg]g(\hat{y})\, \text{d}s(\hat{y}) 
\end{align*}
for $d=3$. 

Notice, that by our calculations we obtained that  
$$(H^*_{B}H_{B})g(\hat{x}) = \int_{\mathbb{S}^{d-1}} h(\hat{x} , \hat{y}) g(\hat{y})\, \text{d}s(\hat{y})$$ 
where the kernel function is given by 
$$h(\hat{x} , \hat{y})= \begin{cases}
         \text{e}^{-\text{i}kz_j\cdot (\hat{x}-\hat{y})} \pi\epsilon^2 (1+k^2\hat{x}\cdot\hat{y}) \Bigg( J_0(k\epsilon|\hat{x}-\hat{y}|) +  J_2(k\epsilon|\hat{x}-\hat{y}|) \Bigg) & \quad \text{for } d=2, \\ \\
         \text{e}^{-\text{i}kz_j\cdot (\hat{x}-\hat{y})}\frac{4}{3}\pi \epsilon^3 (1+k^2\hat{x}\cdot\hat{y}) \Bigg( j_0(k\epsilon|\hat{x}-\hat{y}|) +  j_2(k\epsilon|\hat{x}-\hat{y}|) \Bigg) & \quad \text{for } d=3
    \end{cases}$$
where we see that $h(\hat{x} , \hat{y}) =\overline{h(\hat{y},\hat{x})}$. This is to be expected due to the fact that $H^*_{B}H_{B}$ is a self-adjoint operator. With this we have all we need to provide some numerical reconstructions using the monotonicity result in Theorem \ref{aniso-mono-method}.

\subsection{Numerical Reconstruction}
Now, we will provide some numerical examples for reconstruction of the scatterer $D$ via the monotonicity method. To this end, we will use separation of variables to obtain an integral representation for the far--field operator $F$. For simplicity, we consider the direct scattering problem with constant coefficients on $B(0,R)$, where assume $A=a I$, with constant $a \in \mathbb{C}$ along with constants $n$, and $\eta$. Therefore, we have that \eqref{directacbc1}--\eqref{directacbc2} is given by 
\begin{align*}
    \Delta u^s + k^2u^s = 0 \quad \text{in } \mathbb{R}^d\setminus \overline{B(0,R)} \quad \text{ and }\quad \Delta u + \frac{n}{a}k^2u = 0 \quad &\text{in } B(0,R) \\
    (u^s+u^i)^+ = u^- \quad \text{and} \quad a\partial_r u^- = \partial_r(u^s+u^i)^+ + \eta(u^s+u^i) \quad &\text{on } \partial B(0,R)
\end{align*}
along with the radiation condition as $|x| \to \infty$ for the scattered field $u^s$. Recall that we can express the incident field $u^i=\text{e}^{\text{i}kx\cdot \hat{y}}$, using the Jacobi--Anger expansion is given by 
$$u^i(r,\theta) = \sum_{p\in \mathbb{Z}} \text{i}^p J_p(kr)\text{e}^{\text{i}p(\theta - \phi)}$$
where $x= r \big(\cos(\theta),  \sin(\theta) \big)$ and $\hat{y} = \big(\cos(\phi), \sin(\phi) \big)$.

To solve for the scattered field $u^s$, and the total field $u$ for this problem we make the assumption that they can be written as
$$u^s(r,\theta) = \sum_{p\in \mathbb{Z}} \text{i}^p \text{u}^s_p H_p^{(1)}(kr)\text{e}^{\text{i}p(\theta - \phi)} \quad \text{and} \quad u(r,\theta)= \sum_{p\in \mathbb{Z}} \text{i}^p \text{u}_p J_p\Big( k\sqrt{\frac{n}{a}}r\Big)\text{e}^{\text{i}p(\theta - \phi)}$$
where $H^{(1)}_p$ are Hankel functions of the first kind of order $p$. Here $\text{u}^s_p$ and $\text{u}_p$ can be seen as the coefficients that are to be determined by using the boundary conditions at $r=R$. Therefore, we have that $\text{u}^s_p$ and $\text{u}_p$ satisfy the $2 \times 2$ linear system  
\begin{align*}
     \begin{pmatrix}
        H_p^{(1)}(kR) & -J_p\Big( k\sqrt{\frac{n}{a}}R\Big) \\
        kH_p^{(1)'}(kR)+\eta H_p^{(1)}(kR) & -k\sqrt{na}J'_p\Big( k\sqrt{\frac{n}{a}}R\Big)
    \end{pmatrix} 
    \begin{pmatrix}
        \text{u}^s_p \\
        \text{u}_p
    \end{pmatrix} 
    =
    \begin{pmatrix}
        -J_p(kR) \\
        -kJ'_p(kR) - \eta J_p(kR)
    \end{pmatrix}
\end{align*}
for each $p \in \Z$. We can solve for the coefficients $\text{u}^s_p$ by using Cramer's Rule, we have
\begin{align*}
    \text{u}^s_p = \frac{\text{Det}(M_x)}{\text{Det}(M)} \quad \text{ with matrices } \quad &M=
    \begin{pmatrix}
        H_p^{(1)}(kR) & -J_p\Big( k\sqrt{\frac{n}{a}}R\Big) \\
        kH_p^{(1)'}(kR)+\eta H_p^{(1)}(kR) & -k\sqrt{na}J'_p\Big( k\sqrt{\frac{n}{a}}R\Big)
    \end{pmatrix} \\
    \text{ and } \quad &M_x=
    \begin{pmatrix}
        -J_p(kR) & -J_p\Big( k\sqrt{\frac{n}{a}}R\Big) \\
        -kJ'_p(kR) - \eta J_p(kR) & -k\sqrt{na}J'_p\Big( k\sqrt{\frac{n}{a}}R\Big)
    \end{pmatrix}.
\end{align*}
With this computation of $\text{u}^s_p$, we get an expression for the far--field pattern is given by
\begin{align*}
    u^\infty(\hat{x},\hat{y}) \approx  \frac{4}{\text{i}} \sum_{|p| = 0}^{15} \text{u}^s_p \text{e}^{\text{i}p(\theta - \phi)}\quad \text{ where } \quad \text{u}^s_p = \frac{\text{Det}(M_x)}{\text{Det}(M)}. 
\end{align*}
In all our numerical experiments we use the above approximation of the far-field pattern.

In order to numerically compute the eigenvalues of $\pm \text{Re}(F)- H^*_{B}H_{B}$, we approximate the integral representations of $F$ and $H^*_{B}H_{B}$ with a 64--point Riemann sum. The $64 \times 64$ discretization of the operators $F$ and $H^*_{B}H_{B}$ are computed as 
\begin{align*}
    \textbf{F} = \left[ \frac{2\pi}{64} u^\infty (\hat{x}_i ,\hat{y}_j ) \right]_{i,j=1}^{64} \quad \text{ and } \quad \textbf{H}^*_B\textbf{H}_B = \left[ \frac{2\pi}{64} h(\hat{x}_i ,\hat{y}_j ) \right]_{i,j=1}^{64} \quad \text{ respectively}.
\end{align*}
We recall, that the kernel function to define $\textbf{H}^*_B\textbf{H}_B$ is given by 
$$ h(\hat{x} ,\hat{y} ) = \text{e}^{-\text{i}kz_j\cdot (\hat{x}-\hat{y})} \pi\epsilon^2 (1+k^2\hat{x}\cdot\hat{y}) \Bigg( J_0(k\epsilon|\hat{x}-\hat{y}|) +  J_2(k\epsilon|\hat{x}-\hat{y}|) \Bigg) .$$
Here we discretize the $\mathbb{S}^1$ such that 
$$\hat{x}_i=\hat{y}_i=(\cos(\theta_i),\sin(\theta_i))\quad \text{ where } \quad \theta_i=2\pi(i-1)/64 \,\, \text{ for} \,\,i=1,\dots,64.$$ 
In order to test the stability we will add random noise to the discretized far--field operator $\mathbf{F}$ such that
$$\mathbf{F}_{\delta}=\Big[ \mathbf{F}_{i,j}(1+\delta\mathbf{E}_{i,j})\Big]^{64}_{i,j=1}\hspace{.5cm}\text{where}\hspace{.5cm} \| \mathbf{E}\|_2=1.$$
Here, the matrix $\mathbf{E} \in \C^{64 \times 64}$ is taken to have random entries and $0<\delta < 1$ is the relative noise level added to the data.

In our numerical computations, we consider recovering $D=B(0,R)$ for different values of $R \in (0,1]$. Therefore, begin by discretizing a rectangular region $\Omega = [-2,2] \times [-2,2]$ that completely encompasses our region $D=B(0,R)$, and we fix a small radius $\epsilon$ of our ball $B$. In all our numerical examples we pick the radius $\epsilon = 10^{-4}$.
According to the theory, as we move our ball $B = B(z,\epsilon)$ along the grid $\Omega$, we should have that there are more positive eigenvalues of the self-adjoint matrix 
$$\pm \text{Re}(\textbf{F}_{\delta}) - \textbf{H}^*_B\textbf{H}_B \quad \text{ where } \quad \text{Re}(\textbf{F}_{\delta}) = \frac{1}{2} \big(\textbf{F}_{\delta} + \textbf{F}_{\delta}^* \big)$$ 
provided that $z \in D$ and less positive eigenvalues if $z \not\in D$, roughly speaking. Therefore, at each point in the region $\Omega$, we compute the imaging function 
$$W(z) = \#\Big\{ \lambda_j \in  \text{eig} \big( \pm \mathrm{Re}(\textbf{F}_{\delta})-\textbf{H}^*_B\textbf{H}_B \big) \, \, : \, \, \lambda_j >0 \Big\}.$$
This is just the cardinality of the set of positive eigenvalues. To evaluate the imaging function we use the built in \texttt{eig} command in \texttt{MATLAB}. This gives us a contour plot that will serve as our reconstruction for $D$. We remark that in our experiments we do not attempt to regularize the imaging function(see for e.g. \cite{mono-w-reg}). This is due to the simplicity of the geometry we are considering. \\

\noindent{\bf Example 1:} For this reconstruction, we will provide two examples with minimal amounts of noise in the data. We provide reconstructions with added $1\%$ noise i.e. $\delta =0.01$ in our calculations. The reconstructions are shown in Figure \ref{acbc_reconst_1noise}.  \\
    \begin{figure}[H]
        \centering 
        \includegraphics[scale=0.59]{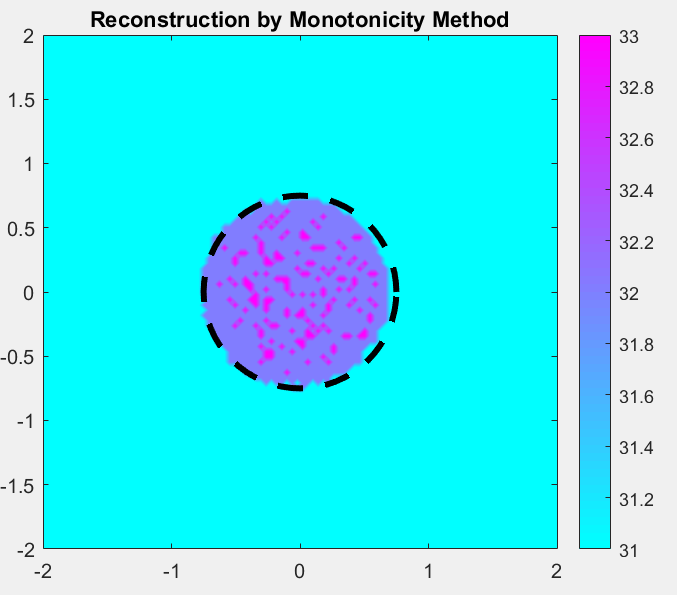} \hspace{0.2in} \includegraphics[scale=.87]{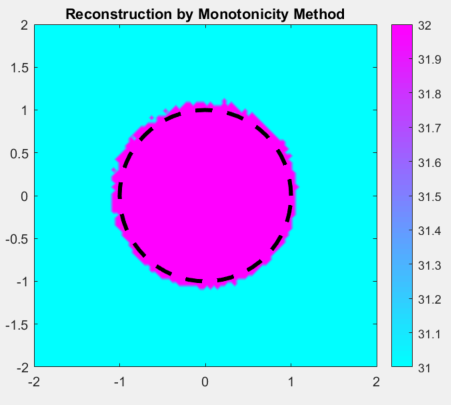}
        \caption{ Reconstruction of domains with $1\%$ noise. Here $D=B(0,3/4)$ with $k = 10$, $a = 2-\text{i}$, $n = 1/2$, $\eta = 2$ (left) and $D=B(0,1)$ with $k = 2\pi$, $a = 1/4$, $n = 2$, $\eta = 2$ (right).}
        \label{acbc_reconst_1noise}
    \end{figure}

\noindent{\bf Example 2:} We provide reconstruction examples for the same scatterers in the previous example with more noise added added to the data i.e. $5\%$ noise. The reconstructions are shown in Figure \ref{acbc_reconst_5noise}.\\
    \begin{figure}[H]
        \centering 
        \includegraphics[scale=0.6]{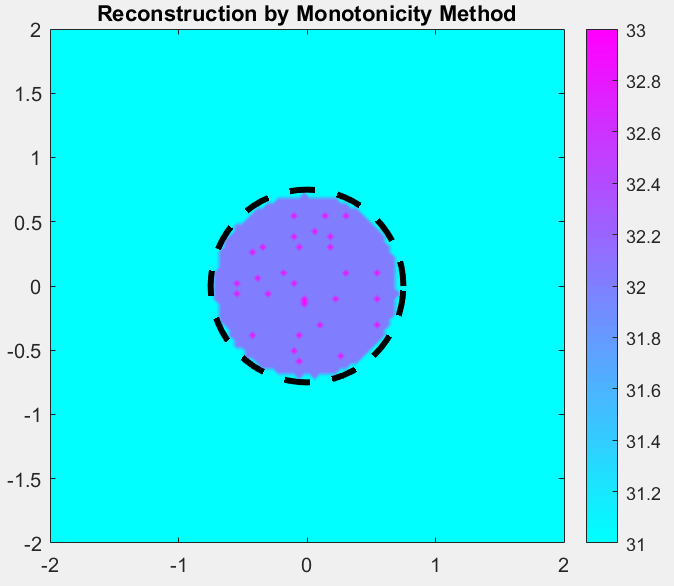} \hspace{0.2in} \includegraphics[scale=.58]{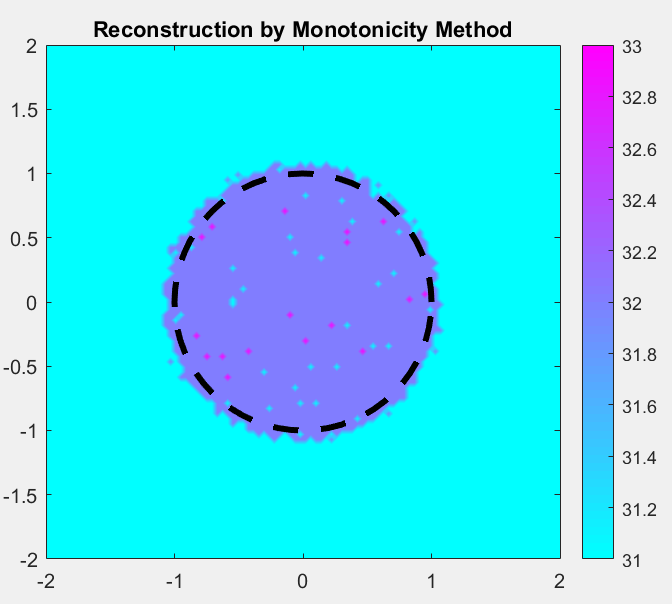}
        \caption{ Reconstruction of domains with $5\%$ noise. Here $D=B(0,3/4)$ with $k = 10$, $a = 2-\text{i}$, $n = 1/2$, $\eta = 2$ (left) and $D=B(0,1)$ with $k = 2\pi$, $a = 1/4$, $n = 2$, $\eta = 2$ (right).}
        \label{acbc_reconst_5noise}
    \end{figure}

\noindent{\bf Example 3:} Now, we provide an example with added $10\%$ noise with slightly different refractive indices $n$ in the scatterer. The reconstructions are shown in Figure \ref{acbc_reconst_10noise}. 
    \begin{figure}[H]
        \centering 
        \includegraphics[scale=0.6]{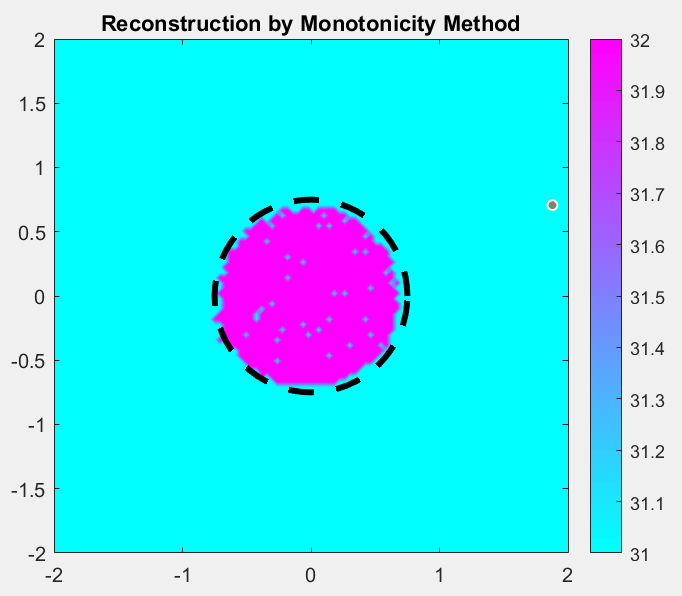} \hspace{0.2in} \includegraphics[scale=.58]{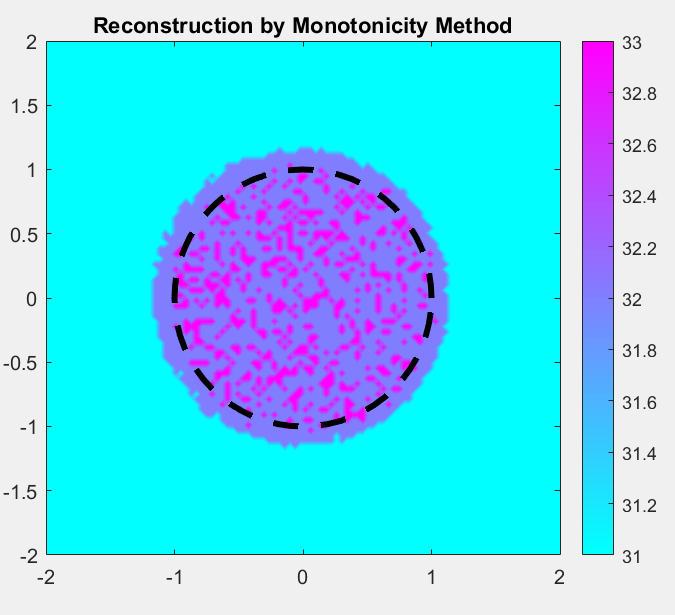}
        \caption{ Reconstruction of domains with $10\%$ noise. Here $D=B(0,3/4)$ with $k = 10$, $a = 2-\text{i}$, $n = 1/2$, $\eta = 2$ (left) and $D=B(0,1)$ with $k = 2\pi$, $a = 1/4$, $n = 2$, $\eta = 2$ (right).}
        \label{acbc_reconst_10noise}
    \end{figure}

\noindent{\bf Example 4:} Lastly, we provide an example where there is no conductivity parameter $\eta$. Also, we are inspired by \cite{mono_AG} and consider finding the radius of the region $D$ by plotting the number of positive eigenvalues where the sampling point is fixed but the sampling ball $B(0 , \epsilon)$ has radius $\epsilon \in (0,2]$. As we see in Figure \ref{acbc_reconst_radiuseg}, the number of positive eigenvalues of our operator decrease as the sampling radius increases. Here, the radius $R=1/2$ for the scatterer $D$ and the number of positive eigenvalues decreases almost monotonically when the radius of the sampling ball $\epsilon>1/2$. This is consistent with the theoretical results. 

   \begin{figure}[H]
        \centering 
        \includegraphics[scale=0.2]{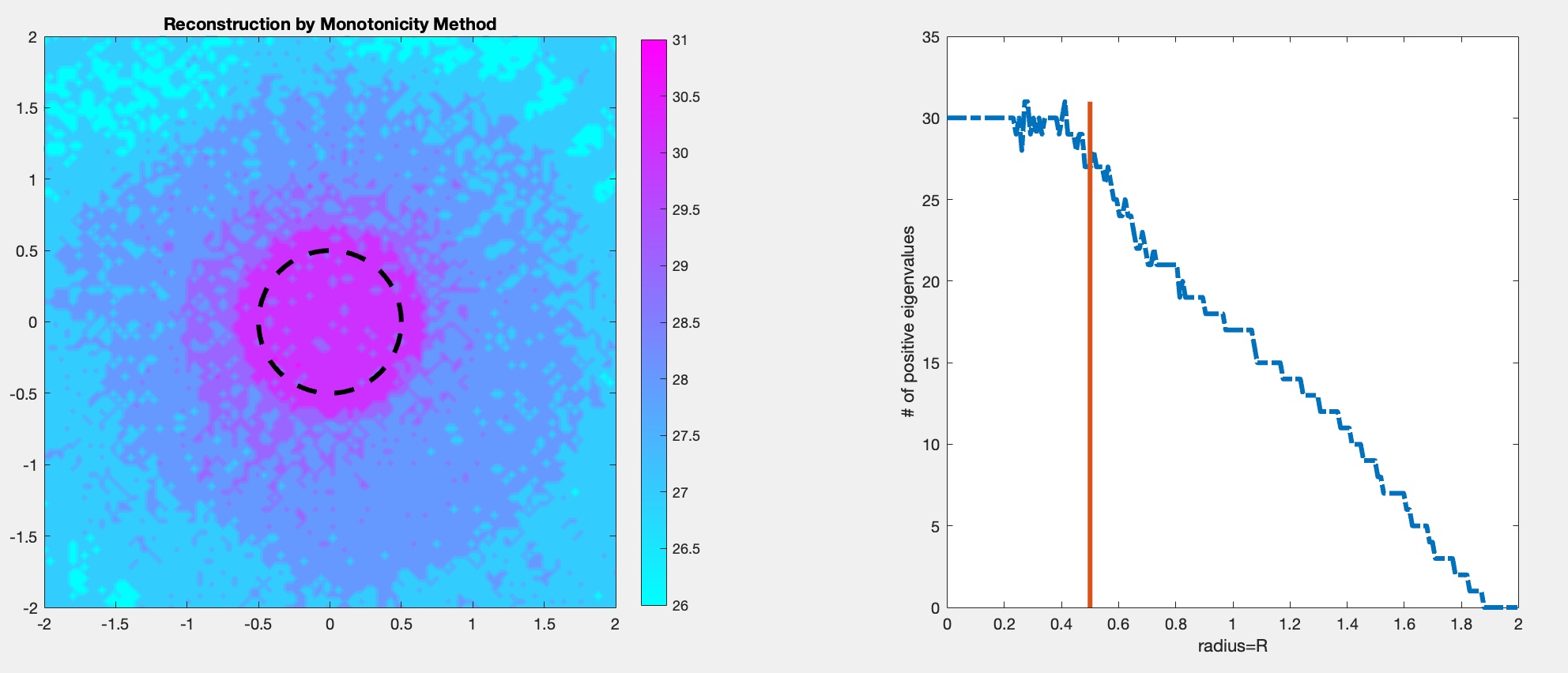}
        \caption{ Reconstruction of domains with $2\%$ noise. Here $D=B(0,1/2)$ with $k = 10$, $a = 4$, $n = 1/2$, $\eta = 0$ (left) and the plot of the number of positive eigenvalues as a function of $\epsilon$ where the sampling ball is $B(0 , \epsilon )$ for $\epsilon \in (0,2]$ (right).}
        \label{acbc_reconst_radiuseg}
    \end{figure} 

From our examples, we have faithful reconstructions for the circular regions under either set of assumptions on the coefficients. We also see that the method is resilient to added noise in these examples. With this we see that the monotonicity method is applicable to reconstructing anisotropic scatterers with or without a conductive boundary condition. 


\section{Conclusions} \label{acbc_section_conclusion}
In this paper, we studied the monotonicity method for an anisotropic material with a conductive boundary on an unbounded domain. We obtained the necessary symmetric factorization of the far--field to apply the monotonicity method. Our main contribution was adopting the theory of the monotonicity method for the anisotropic material under two different assumptions on its physical parameters. We then provided some numerical examples of using the method on different circular domains in two dimensions with various levels of noise. We see that this method creates faithful reconstructions that are resilient to noise. With that, there is still room for more extensive numerical tests of the monotonicity method studied in this paper. Another area to be investigated is applying this analysis for near--field data \cite{near_field} as well as scattering in waveguides \cite{mono-waveguide,fm-waveguide} and by periodic structures \cite{periodictevs} since the factorization of the data operators has been studied. \\

\noindent{\bf Acknowledgments:} The research of V. Hughes, I. Harris and H. Lee is partially supported by the NSF DMS Grant 2107891.


\end{document}